\numberwithin{equation}{section}
\newtheorem{thm}{Theorem}[section]
\newtheorem{lem}[thm]{Lemma}
\newtheorem{prop}[thm]{Proposition}
\newtheorem{defn}[thm]{Definition}
\newtheorem{prob}[thm]{Problem}
\newcommand{\weight}{\frac{\alpha !}{|\alpha|!}}
\begin{document}
%\openup2.5\jot
\title[]{Operator-valued Herglotz kernels and functions of positive real part on the ball}
\author{Michael T. Jury}
\address{Department of Mathematics,
        University of Florida, 
        Gainesville, Florida 32603}
\email{mjury@math.ufl.edu}
\thanks{Partially supported by NSF grant DMS-0701268}
%\thanks{{\emph{Keywords:} Julia-Caratheodory theorem, Schur-Agler class, de Branges-Rovnyak space}}
%\thanks{2000 \emph{Mathematics subject classification:} 30C80, 32A40, 46E22}
\date{\today}
\begin{abstract}  We describe several classes of holomorphic functions of positive real part on the unit ball; each is characterized by an operator-valued Herglotz formula.  Motivated by results of J.E. McCarthy and M. Putinar, we define a family of weighted Cauchy-Fantappi\`e pairings on the ball and establish duality relations between certain pairs of classes, and in particular we identify the dual of the positive Schur class.  We also establish the existence of self-dual classes with respect to this pairing, and identify some extreme points of the positive Schur class.  
\end{abstract}
\maketitle
\section{Introduction}
It is well-known that a function $f$, holomorphic in the unit disk $\mathbb D$, has nonnegative real part if and only if there exists a finite, positive Borel measure $\mu$ on the unit circle $\mathbb T$ such that
\begin{equation}\label{E:1d-herglotz}
f(z) =\int_\mathbb{T} \frac{1+z\overline{\zeta}}{1-z\overline{\zeta}}\, d\mu(\zeta) +i\Im f(0).
\end{equation}
This expression is called the \emph{Herglotz representation} of $f$.  No such simple characterization is available in higher dimensions.  One may consider the class $M^+$, consisting of all functions on the unit ball $\mathbb B^d\subset \mathbb C^d$ of the form
$$
f(z) =\int_{\partial\mathbb B^d} \frac{1+\langle z, \zeta\rangle}{1-\langle z,\zeta\rangle}\, d\mu(\zeta) +i\Im f(0)
$$
where $\mu$ is a positive measure on $\partial\mathbb B^d$, but this does not exhaust the class of holomorphic functions of positive real part.  
The present paper examines several subclasses of functions of positive real part in $\mathbb B^d$, which are shown to admit what one might call a ``noncommutative Herglotz representation;'' that is, a representation where the integral of the Herglotz kernel against a measure is replaced by the value of an operator-valued Herglotz kernel (taking values in a C*-algebra $A$) under a positive functional on the C*-algebra.   

Indeed a result of this form (though not stated in quite these terms) was established by McCarthy and Putinar \cite{MR2274973} for functions in the \emph{positive Schur class} $S^+$, which consists of all holomorphic functions $f$ on $\mathbb B^d$ for which the Hermitian kernel
$$
\frac{f(z)+\overline{f(w)}}{1-\langle z,w\rangle}
$$
is positive semidefinite.  In the noncommutative Herglotz formula for $S^+$, the algebra $C(\partial\mathbb B^d)$ of continuous functions on $\partial\mathbb B^d$ is replaced by the \emph{Cuntz algebra} $\mathcal O_d$, which is the universal C*-algebra generated by a family of isometries $V_1, \dots V_d$ satisfying the relations
$$
V^*_i V_j =\delta_{ij}, \quad \sum_{j=1}^d V_j V_j^* =I.  
$$
The Herglotz kernel is replaced by the $\mathcal O_d$-valued holomorphic function
$$
H(z,V)=2(I-\sum_{j=1}^d z_j V_j )^{-1} -I.
$$
We then have the following theorem:
\begin{thm}[\cite{MR2274973}]\label{T:mccp}
Let $f$ be holomorphic in $\mathbb B^d$.  Then $f\in S^+$ if and only if there exists a positive linear functional $\rho$ on $\mathcal O_d$ and a real number $t$ such that
\begin{equation}\label{E:mccp}
f(z)=\rho(H(z,V))+it.
\end{equation}
\end{thm}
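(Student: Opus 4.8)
The plan is to establish the two implications separately. The forward direction (existence of $\rho$ and $t$ implies $f\in S^+$) is a short computation with the GNS construction; the converse is the substantive part and proceeds by a reproducing‑kernel/lurking‑isometry argument followed by a unitary dilation producing a Cuntz family.

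\emph{From the representation to $S^+$.} Suppose $f(z)=\rho(H(z,V))+it$. Apply GNS to $\rho$: get a $*$‑representation $\pi\colon\mathcal O_d\to B(\mathcal H)$ and $\xi\in\mathcal H$ with $\rho(\cdot)=\langle\pi(\cdot)\xi,\xi\rangle$, and put $T_j=\pi(V_j)$, so $T_i^*T_j=\delta_{ij}I$. Then $f(z)=2\langle(I-Z)^{-1}\xi,\xi\rangle-\|\xi\|^2+it$ with $Z=\sum_j z_jT_j$ (holomorphic on $\mathbb B^d$ since $\|Z\|=|z|<1$), and writing $W=\sum_j w_jT_j$ one gets $\tfrac12\bigl(f(z)+\overline{f(w)}\bigr)=\langle\bigl[(I-Z)^{-1}+(I-W^*)^{-1}-I\bigr]\xi,\xi\rangle$. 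I would then verify the operator identity $(I-Z)^{-1}+(I-W^*)^{-1}-I=(1-\langle z,w\rangle)(I-W^*)^{-1}(I-Z)^{-1}$, which gives $\tfrac{f(z)+\overline{f(w)}}{1-\langle z,w\rangle}=2\langle(I-Z)^{-1}\xi,(I-W)^{-1}\xi\rangle$, a Gram kernel, hence positive semidefinite. The identity is checked by multiplying on the left by $I-W^*$ and using $W^*Z=\bigl(\sum_j z_j\overline{w_j}\bigr)I=\langle z,w\rangle I$ (the only place $T_i^*T_j=\delta_{ij}I$ is used): it collapses to $\{\langle z,w\rangle I-W^*\}(I-Z)^{-1}=-W^*$, true because $\langle z,w\rangle I-W^*=W^*Z-W^*=-W^*(I-Z)$.

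\emph{From $S^+$ to the representation.} Assume $f\in S^+$ and let $\mathcal H_K$ be the reproducing‑kernel Hilbert space of $K(z,w)=\tfrac{f(z)+\overline{f(w)}}{1-\langle z,w\rangle}$, with $K_w=K(\cdot,w)$, $\langle K_w,K_z\rangle=K(z,w)$, $\langle h,K_w\rangle=h(w)$, $\|K_0\|^2=2\Re f(0)$, and $\langle K_0,K_z\rangle=f(z)+\overline{f(0)}$. Substituting these into $(1-\langle z,w\rangle)K(z,w)=f(z)+\overline{f(w)}$ turns it into the identity $\langle K_w-K_0,\,K_z-K_0\rangle=\sum_{j=1}^{d}\langle\overline{w_j}K_w,\overline{z_j}K_z\rangle$, an equality of positive kernels, which furnishes a well‑defined isometry $W_0$ from $\overline{\operatorname{span}}\{(\overline{z_1}K_z,\dots,\overline{z_d}K_z)\}\subseteq\mathcal H_K^{\,d}$ onto $\overline{\operatorname{span}}\{K_z-K_0\}\subseteq\mathcal H_K$, determined by $W_0(\overline{z_1}K_z,\dots,\overline{z_d}K_z)=K_z-K_0$. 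Enlarging $\mathcal H_K$ to $\mathcal K=\mathcal H_K\oplus\mathcal N$ with $\mathcal N$ infinite‑dimensional of dimension $\ge\dim\mathcal H_K$, a dimension count of defect spaces lets one extend $W_0$ to a unitary $\Omega\colon\mathcal K^{\,d}\to\mathcal K$. Writing $\Omega=[\,T_1\ \cdots\ T_d\,]$, the relations $\Omega^*\Omega=I$ and $\Omega\Omega^*=I$ say precisely that $(T_1,\dots,T_d)$ is a Cuntz family, and $\Omega$ extending $W_0$ reads $\sum_j\overline{z_j}T_jK_z=K_z-K_0$, i.e. $K_z=(I-\sum_j\overline{z_j}T_j)^{-1}K_0$; substituting into $f(z)=\langle K_0,K_z\rangle-\overline{f(0)}$ and expanding gives $f(z)=\langle(I-\sum_j z_jT_j^*)^{-1}K_0,K_0\rangle-\overline{f(0)}$. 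Finally I repackage this over $\mathcal O_d$: viewed on the conjugate Hilbert space $\overline{\mathcal K}$ the family $(T_j)$ is again a Cuntz family, with the adjoint of $T_j$ acting as $T_j^*$; it induces $\pi\colon\mathcal O_d\to B(\overline{\mathcal K})$ with $\pi(V_j)=T_j$, and with $\rho(a)=\tfrac12\langle\pi(a)K_0,K_0\rangle_{\overline{\mathcal K}}$ (a positive functional) and $t=\Im f(0)$ a direct computation yields $\rho(H(z,V))+it=\langle(I-\sum_j z_jT_j^*)^{-1}K_0,K_0\rangle-\overline{f(0)}=f(z)$. (Alternatively, one could route the converse through the Cayley transform $\varphi=(f-1)/(f+1)$, for which $\tfrac{1-\varphi(z)\overline{\varphi(w)}}{1-\langle z,w\rangle}$ is positive, and invoke a transfer‑function realization; the argument above is self‑contained.)

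\emph{Where the difficulty lies.} The converse is the crux, and its main obstacle is the dilation step: the lurking isometry $W_0$, taken at face value, only asks to be completed to a row of isometries with pairwise orthogonal ranges — a representation of the Cuntz–Toeplitz algebra, not of $\mathcal O_d$. Forcing a genuine Cuntz family means extending $W_0$ to a full unitary $\mathcal K^{\,d}\to\mathcal K$, which is exactly why one must enlarge the Hilbert space. The conjugation in the last step is a small but genuine wrinkle coming from the conjugate‑linearity of the reproducing‑kernel pairing: in one variable it is invisible (there $T^*$ is again unitary, hence a legitimate "Cuntz family" for $d=1$), but for $d\ge 2$ the adjoints of a Cuntz family are not a Cuntz family, so one really does pass to $\overline{\mathcal K}$ (equivalently, builds $\rho$ from the opposite representation).
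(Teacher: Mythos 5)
Your proof is correct. Note first that the paper itself offers no proof of this statement: it is quoted verbatim from McCarthy--Putinar \cite{MR2274973}, so there is no in-paper argument to compare against line by line. Judged on its own, your argument is a sound, self-contained reconstruction in the spirit of the original. The forward direction is right: the resolvent identity $(I-Z)^{-1}+(I-W^{*})^{-1}-I=(1-\langle z,w\rangle)(I-W^{*})^{-1}(I-Z)^{-1}$ checks out (it uses only $V_i^{*}V_j=\delta_{ij}I$, which is all that is needed there), and it exhibits the Herglotz kernel of $f$ as a Gram kernel. The converse is the standard lurking-isometry realization: your Gram identity $\langle K_w-K_0,K_z-K_0\rangle=\sum_j\langle\overline{w_j}K_w,\overline{z_j}K_z\rangle$ is exactly the rearrangement of $(1-\langle z,w\rangle)K(z,w)=f(z)+\overline{f(w)}$, the enlargement of the space to make the partial isometry a genuine unitary $\mathcal K^{d}\to\mathcal K$ (hence a Cuntz family rather than merely a Cuntz--Toeplitz family) is handled correctly by the cardinality argument, and the passage to the conjugate Hilbert space to convert $\langle(I-\sum z_jT_j^{*})^{-1}K_0,K_0\rangle$ into $\rho(H(z,V))$ for an honest representation of $\mathcal O_d$ is a legitimate (and necessary, for $d\geq 2$) device; an equivalent alternative would have been to run the argument for $\check f(z)=\overline{f(\overline z)}$, using the observation $\check S^{+}=S^{+}$ recorded later in the paper. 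Your method also dovetails with the machinery the paper does develop: the GNS-plus-dilation description of $S^{+}$ via formula (\ref{E:T_herglotz}) and Popescu's theorem is essentially your forward direction read backwards.
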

Letting $O^+$ denote the class of all holomorphic functions with positive real part in $\mathbb B^d$, McCarthy and Putinar established the inclusions
$$
M^+ \subset S^+ \subset O^+
$$
and showed that each is proper.  They also introduced a pairing on $O^+$ such that $M^+$ and $O^+$ are dual to each other, and the dual of $S^+$ is contained in $S^+$.  The main results of this paper continue this line of investigation:  we introduce a fourth class $R^+$ which is shown to lie strictly between $M^+$ and $S^+$, and show that $R^+$ and $S^+$ are dual (under a slightly different definition of the pairing, which has more favorable topological properties).  We also establish an operator-valued Herglotz representation for $R^+$ and describe some of the extreme rays of $R^+$ and $S^+$.  
\section{Positive Classes}
Let $O$ denote the set of all holomorphic functions on the unit ball $\mathbb{B}^d\subset \mathbb{C}^d$, and $O^+ \subset O$ the functions of positive real part.  We equip $O$ with the topology of uniform convergence on compact subsets of $\mathbb{B}^d$, which makes $O$ into a locally convex topological vector space.  We will be interested in several subclasses of $O^+$ which all satisfy a number of functional-analytic conditions, codified by the following definition.  
\begin{defn}
A \emph{positive class} on $\mathbb{B}^d$ is a set of functions $\mathcal{P}\subset O^+$ which is a closed, convex cone in $O$ and is closed under dilations, that is, if $f\in \mathcal{P}$ then $f_r\in \mathcal{P}$ for all $0\leq r \leq 1$, where $f_r$ is defined by
$$
f_r(z)=f(rz)
$$
\end{defn}
Evidently $O^+$ is a positive class; we will discuss further examples below (the classes $M^+$, $R^+$ and $S^+$).  It is also clear that the intersection of any collection of positive classes is a positive class, so given any subset $E\subset O^+$ we may speak of the positive class $\mathcal P(E)$ it generates.

We introduce a family of Cauchy-Fantappi\`e pairings analogous to the classical Cauchy pairing in the disk.  In the case of the disk, if the functions are sufficiently smooth then the pairing is essentially the inner product in the Hilbert space $H^2$.  The analogous Hilbert space on $\mathbb B^d$ is the Drury-Arveson space $H^2_d$, which is most easily defined as the reproducing kernel Hilbert space whose kernel is the Fantappi\`e kernel
$$
k(z,w)=\frac{1}{1-\langle z,w\rangle} =\sum_\alpha z^\alpha \overline{w}^\alpha \frac{|\alpha| !}{\alpha !}
$$
Here we have used the usual multi-index notation:  
$$
\alpha=(\alpha_1, \dots \alpha_d)\in\mathbb N^d,
$$
$$
\alpha ! =\alpha_1 ! \alpha_2! \cdots \alpha_d!,
$$
$$
|\alpha|=\alpha_1 +\cdots +\alpha_d, 
$$
$$
z^\alpha =z_1^{\alpha_1} \cdots z_d^{\alpha_d}.
$$
For functions $f,g\in H^2_d$ with Taylor expansions
$$
f(z)=\sum_{\alpha\in\mathbb{N}^d} c_\alpha z^\alpha, \qquad  g(z)=\sum_{\alpha\in\mathbb{N}^d} d_\alpha z^\alpha
$$
the inner product is given by 
\begin{equation}\label{E: series_inner_prod}
\langle f,g\rangle_{H^2_d} = \sum_\alpha c_\alpha \overline{d_\alpha} \weight
\end{equation}
We may also express the inner product in an integral formula \cite{MR1854766}.  To state in we let $\mathcal R_d$ denote the \emph{radial derivative}:
$$
(\mathcal R_d f)(z):=\sum_{j=1}^d z_j \frac{\partial f}{\partial z_j}
$$
Then
\begin{equation}\label{E:integral_inner_prod}
\langle f,g\rangle_{H^2_d} =f(0)\overline{g(0)} + \frac{1}{d!}\int_{\mathbb B^d}(\mathcal R_df)(z)\overline{g(z)}\frac{1}{|z|^{2d}}\, d\nu(z)
\end{equation}
where $\nu$ is normalized volume measure on $\mathbb B^d$.  Note that the singularity in the integrand of (\ref{E:integral_inner_prod}) is integrable near $0$, since
$$
|(\mathcal R_d f)(z)| \leq |z| \|Df(z)\|.
$$
Now define a family of pairings $Q_r(\cdot, \cdot)$ on $O$ as follows:  given function $f,g\in O$ with Taylor expansions
$$
f(z)=\sum_{\alpha\in\mathbb{N}^d} c_\alpha z^\alpha, \qquad  g(z)=\sum_{\alpha\in\mathbb{N}^d} d_\alpha z^\alpha
$$
define for each $0< r< 1$
\begin{equation}\label{E:qr_def}
Q_r(f, g) = \sum_\alpha c_\alpha \overline{d_\alpha} r^\alpha \weight +f(0)\overline{g(0)}
\end{equation}\label{E:convergence}
\begin{lem}\label{L:convergence} The series defining $Q_r(f, g)$ converges absolutely for all $f,g\in O$ and all $r\in[0,1)$.
\end{lem}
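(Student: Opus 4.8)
The plan is to peel off a Cauchy--Schwarz estimate reducing the claim to a single function, recognize the resulting series as a squared Drury--Arveson norm, and then bound that by an $L^2$ integral over a slightly smaller sphere. First I would observe that since $r$ is a scalar we have $r^\alpha = r^{|\alpha|}$, so each coefficient $w_\alpha := r^{|\alpha|}\weight$ is nonnegative. Applying the Cauchy--Schwarz inequality in $\ell^2$ to the sequences $\bigl(|c_\alpha|\sqrt{w_\alpha}\bigr)_\alpha$ and $\bigl(|d_\alpha|\sqrt{w_\alpha}\bigr)_\alpha$ gives
\[
\sum_\alpha |c_\alpha|\,|d_\alpha|\,r^{|\alpha|}\weight \;\le\; \Bigl(\sum_\alpha |c_\alpha|^2 r^{|\alpha|}\weight\Bigr)^{1/2}\Bigl(\sum_\alpha |d_\alpha|^2 r^{|\alpha|}\weight\Bigr)^{1/2},
\]
and the extra term $f(0)\overline{g(0)}$ is a single scalar; so it suffices to show $\sum_\alpha |c_\alpha|^2 r^{|\alpha|}\weight < \infty$ whenever $f(z)=\sum_\alpha c_\alpha z^\alpha$ is holomorphic on $\mathbb B^d$ and $0\le r<1$. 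By \eqref{E: series_inner_prod} this sum equals $\|f_{\sqrt r}\|_{H^2_d}^2$, so the lemma is precisely the statement that every dilate $f_{\sqrt r}$ of a function holomorphic on $\mathbb B^d$ lies in the Drury--Arveson space.

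To prove that, I would fix $t$ with $\sqrt r < t < 1$ and integrate $|f(tz)|^2$ over the unit sphere against the normalized rotation-invariant measure $\sigma$. Using the uniform convergence of the homogeneous Taylor expansion of $f$ on the compact set $\overline{t\mathbb B^d}\subset\mathbb B^d$, the orthogonality $\int_{\partial\mathbb B^d} z^\alpha\overline{z^\beta}\,d\sigma = 0$ for $\alpha\ne\beta$, and the classical moment formula $\int_{\partial\mathbb B^d}|z^\alpha|^2\,d\sigma(z) = \frac{(d-1)!\,\alpha!}{(|\alpha|+d-1)!}$, term-by-term integration yields
\[
\int_{\partial\mathbb B^d}|f(tz)|^2\,d\sigma(z) \;=\; (d-1)!\sum_\alpha |c_\alpha|^2\, t^{2|\alpha|}\,\frac{\alpha!}{(|\alpha|+d-1)!},
\]
whose left-hand side is at most $\sup_{|z|\le t}|f(z)|^2 < \infty$ since $f$ is continuous on $\overline{t\mathbb B^d}$. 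The two weights differ only by the factor $\frac{(|\alpha|+d-1)!}{|\alpha|!} = (|\alpha|+1)(|\alpha|+2)\cdots(|\alpha|+d-1) =: P(|\alpha|)$, a polynomial in $|\alpha|$ of degree $d-1$; since $r/t^2 < 1$, the sequence $(r/t^2)^n P(n)$ is bounded by some constant $C$, hence $r^{|\alpha|}P(|\alpha|)\le C\, t^{2|\alpha|}$ and
\[
\sum_\alpha |c_\alpha|^2 r^{|\alpha|}\weight \;=\; \sum_\alpha |c_\alpha|^2 r^{|\alpha|} P(|\alpha|)\frac{\alpha!}{(|\alpha|+d-1)!} \;\le\; C\sum_\alpha |c_\alpha|^2 t^{2|\alpha|}\frac{\alpha!}{(|\alpha|+d-1)!} \;=\; \frac{C}{(d-1)!}\int_{\partial\mathbb B^d}|f(tz)|^2\,d\sigma(z) \;<\;\infty.
\]
Feeding this back into the Cauchy--Schwarz bound completes the proof.

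The only genuinely essential point is the weight comparison in the second paragraph: the Drury--Arveson weight $\weight$ exceeds the natural Hardy-space-of-the-ball weight $(d-1)!\,\alpha!/(|\alpha|+d-1)!$ only by a factor polynomial in $|\alpha|$, and this polynomial excess is absorbed by passing from the radius $t$ down to $\sqrt r$. Everything else --- the reduction by Cauchy--Schwarz, the uniform convergence and termwise integration of the Taylor series on $\overline{t\mathbb B^d}$, and the orthogonality of distinct monomials on the sphere --- is routine bookkeeping, so I do not anticipate any serious obstacle. (One could equally run the argument through the volume-measure formula \eqref{E:integral_inner_prod} instead of the sphere, using that $f_{\sqrt r}$ and its radial derivative are bounded on $\mathbb B^d$ and that the singularity $|z|^{-2d}$ there is tamed by the factor $|z|$ in $|(\mathcal R_d f)(z)|\le |z|\,\|Df(z)\|$; the sphere version seems cleanest.)
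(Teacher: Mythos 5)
Your proof is correct, and it shares the paper's opening move: apply Cauchy--Schwarz to reduce absolute convergence of the bilinear series to the single claim that every dilate of an $f\in O$ lies in $H^2_d$ (equivalently, that $\sum_\alpha |c_\alpha|^2 r^{|\alpha|}\weight<\infty$). Where you diverge is in how that claim is established. The paper simply invokes the integral formula \eqref{E:integral_inner_prod} for the Drury--Arveson inner product: since $f_t$ is analytic across $\partial\mathbb B^d$, it and its radial derivative are bounded on $\overline{\mathbb B^d}$ and the integral is manifestly finite. You instead give a self-contained coefficient estimate: integrate $|f(tz)|^2$ over the sphere, use orthogonality of monomials and the moment formula $\int_{\partial\mathbb B^d}|z^\alpha|^2\,d\sigma=\frac{(d-1)!\,\alpha!}{(|\alpha|+d-1)!}$, and observe that the Drury--Arveson weight $\weight$ exceeds this Hardy-space weight only by a polynomial factor in $|\alpha|$, which is absorbed by shrinking the radius from $t$ to $\sqrt r$. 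Your weight bookkeeping checks out ($P(|\alpha|)\cdot\alpha!/(|\alpha|+d-1)!=\weight$, and $(r/t^2)^nP(n)$ is bounded since $r<t^2$). The trade-off is clear: the paper's route is two lines once \eqref{E:integral_inner_prod} is granted, while yours avoids that formula entirely and makes the quantitative relationship between $H^2_d$ and $H^2(\partial\mathbb B^d)$ explicit --- a comparison that is genuinely informative, since it shows the lemma only needs the polynomial discrepancy between the two weight sequences. Either argument is acceptable; as you note yourself, the volume-measure version is exactly the one the paper uses.
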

\begin{proof}
Since the pairing $Q_r(f,g)$ is formally the same (up to the extra $f(0)\overline{g(0)}$ term) as the $H^2_d$ inner product of the dilated functions $f_{\sqrt{r}}$ and $g_{\sqrt{r}}$, by the Cauchy-Schwarz inequality it suffices to show that if $f\in O$ then all of its dilates $f_r$ belong to $H^2_d$.  But this is immediate from the integral form of the $H^2_d$ inner product (\ref{E:integral_inner_prod}), since $f_r$ is analytic across the boundary $\partial \mathbb B^d$ and so $f_r$ and its derivatives are uniformly bounded on $\overline{\mathbb B^d}$.
\end{proof}

Our interest is in the duality relationships that obtain between positive classes that admit representations by a generalized Herglotz formula, more specifically, in terms of a positive functional applied to an operator-valued Herglotz kernel, taking values in a (generally noncommutative) C*-algebra.
Using the pairings $Q_r$ we can define the dual of a subset $\mathcal{C}\subset O$:  
\begin{defn}
For $\mathcal{C}\subset O$ define
$$
\mathcal{C}^* =\{g\in O\  |\  \Re Q_r(f, g)\geq 0\text{ for all }f\in \mathcal{C}\text{ and all }r\in[0,1)  \}
$$
\end{defn}
This is slightly different from the dual $\mathcal C^\dag$ defined in \cite{MR2274973}; the advantage of the present definition is that duals of (sufficiently large) positive classes are again positive classes; in particular they are always closed and convex.

We will write $Q(f,g)$ for the pairing in the case $r=1$, provided the series is absolutely convergent.  From the proof of Lemma~\ref{L:convergence}, this will be the case whenever either $f$ or $g$ is holomorphic in a neighborhood of the closed ball.  In particular, every continuous linear functional on $O$ can be represented in this way:

\begin{thm}\label{T:duality}
A linear map $\varphi :O\to \mathbb{C}$ is continuous if and only if there exists $g\in O(\overline{\mathbb{B}^d})$ such that 
$$
\varphi(f) =Q(f,g)
$$
for all $f\in O$.  
\end{thm}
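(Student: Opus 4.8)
I would prove the two implications separately, following the classical identification of the dual of $\mathcal{O}(\mathbb{D})$ with $\mathcal{O}(\overline{\mathbb{D}})$ under the Cauchy pairing; the only new feature is the presence of the Drury--Arveson weights $\alpha!/|\alpha|!$ in the pairing.

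\emph{Sufficiency.} Suppose $g\in O(\overline{\mathbb{B}^d})$, say holomorphic on $\rho\mathbb{B}^d$ with $\rho>1$, and fix $t\in(1/\rho,1)$. Inserting $t^{|\alpha|}t^{-|\alpha|}=1$ into each term of the series defining $Q=Q_1$ yields the identity
\[
Q(f,g)=\langle f_t,\,g_{1/t}\rangle_{H^2_d}+f(0)\overline{g(0)},
\]
valid for every $f\in O$, where both sides converge absolutely because $g_{1/t}\in O(\overline{\mathbb{B}^d})\subset H^2_d$ (as $1/t<\rho$) and $f_t\in H^2_d$ (as $t<1$, by Lemma~\ref{L:convergence}). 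The quantitative form of the argument proving Lemma~\ref{L:convergence} --- applying the integral formula (\ref{E:integral_inner_prod}) to $f_t$ and bounding $\mathcal{R}_d f_t$ and $f_t$ on $\overline{\mathbb{B}^d}$ by the Cauchy estimates for $f$ on a slightly larger ball --- shows $\|f_t\|_{H^2_d}\le C\sup_{|z|\le t'}|f(z)|$ for any $t<t'<1$; Cauchy--Schwarz then gives $|Q(f,g)|\le C'\sup_{|z|\le t'}|f(z)|$, so $f\mapsto Q(f,g)$ is a continuous linear functional on $O$.

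\emph{Necessity.} Let $\varphi\colon O\to\mathbb{C}$ be continuous. Since the topology of $O$ is generated by the increasing family of seminorms $f\mapsto\sup_{|z|\le r}|f(z)|$, there are $r\in(0,1)$ and $C>0$ with $|\varphi(f)|\le C\sup_{|z|\le r}|f(z)|$ for all $f\in O$. Put $a_\alpha=\varphi(z^\alpha)$. Testing on monomials, together with the elementary bound $\sup_{z\in\overline{\mathbb{B}^d}}|z^\alpha|^2=\alpha^\alpha/|\alpha|^{|\alpha|}\le\alpha!/|\alpha|!$ --- immediate from the multinomial theorem, since $(|\alpha|!/\alpha!)\,\alpha^\alpha\le\sum_{|\beta|=|\alpha|}(|\alpha|!/\beta!)\,\alpha^\beta=|\alpha|^{|\alpha|}$ --- gives
\[
|a_\alpha|\le C\,r^{|\alpha|}\bigl(\alpha!/|\alpha|!\bigr)^{1/2}.
\]
I then let $g$ be the power series with constant term $g(0)=\overline{a_0}/2$ and remaining coefficients $d_\alpha=\overline{a_\alpha}\,(|\alpha|!/\alpha!)$. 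Using the coefficient estimate once more, $\sum_\alpha|d_\alpha|^2R^{2|\alpha|}\,(\alpha!/|\alpha|!)\le C^2\sum_\alpha(rR)^{2|\alpha|}<\infty$ for every $R<1/r$ (the number of $\alpha$ with $|\alpha|=n$ being $O(n^{d-1})$), so $g$ extends holomorphically to $(1/r)\mathbb{B}^d\supset\overline{\mathbb{B}^d}$, i.e.\ $g\in O(\overline{\mathbb{B}^d})$. A direct computation with (\ref{E:qr_def}) at $r=1$ gives $Q(z^\alpha,g)=\overline{d_\alpha}\,(\alpha!/|\alpha|!)=a_\alpha$ for $\alpha\ne 0$, while the $\alpha=0$ term of the series plus the extra term $f(0)\overline{g(0)}$ gives $Q(1,g)=2\overline{g(0)}=a_0$ --- this last computation is exactly what forces the factor $1/2$ in $g(0)$. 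Thus $Q(\cdot,g)$ and $\varphi$ agree on all monomials, hence on all polynomials. By the sufficiency part $Q(\cdot,g)$ is continuous, $\varphi$ is continuous, and polynomials are dense in $O$ (the homogeneous Taylor expansion of any $f\in O$ converges to $f$ uniformly on compact subsets of $\mathbb{B}^d$), so $\varphi(f)=Q(f,g)$ for all $f\in O$.

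The argument is largely formal once its two analytic inputs are secured: the monomial norm comparison $\alpha^\alpha/|\alpha|^{|\alpha|}\le\alpha!/|\alpha|!$, which is what lets the coefficient bound $|a_\alpha|=O\bigl(r^{|\alpha|}(\alpha!/|\alpha|!)^{1/2}\bigr)$ propagate to holomorphy of $g$ on a \emph{strictly} larger ball; and the fact --- essentially contained in the proof of Lemma~\ref{L:convergence} --- that dilation $f\mapsto f_t$ is continuous from $O$ into $H^2_d$, whose only delicate point is the integrability of the singular weight $|z|^{-2d}$ against $(\mathcal{R}_d f_t)\overline{f_t}$ near the origin. I would expect the bookkeeping around the $f(0)\overline{g(0)}$ term and the constant $1/2$ to be the easiest spot to slip, rather than a genuine obstacle.
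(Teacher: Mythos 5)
Your proof is correct, and while the sufficiency direction coincides with the paper's argument (both rest on the identity $Q(f,g)=\langle f_t,g_{1/t}\rangle_{H^2_d}+f(0)\overline{g(0)}$ plus Cauchy--Schwarz), your necessity direction takes a genuinely different route. The paper views $O$ as a closed subspace of $C(\mathbb B^d)$, extends $\varphi$ by Hahn--Banach to a functional represented by a measure $\mu$ with compact support $K\subset\mathbb B^d$, and then defines $g$ as the Herglotz-type integral $g(z)=\tfrac12\int_K\frac{1+\langle z,w\rangle}{1-\langle z,w\rangle}\,d\overline{\mu}(w)$; holomorphy of $g$ past the closed ball and the identity $\varphi(f)=Q(f,g)$ then both fall out of the fact that $K$ stays away from $\partial\mathbb B^d$. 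You instead work entirely at the level of Taylor coefficients: the seminorm bound $|\varphi(f)|\le C\sup_{|z|\le r}|f(z)|$ with $r<1$, combined with the exact value $\sup_{\overline{\mathbb B^d}}|z^\alpha|^2=\alpha^\alpha/|\alpha|^{|\alpha|}$ and the multinomial inequality $\alpha^\alpha/|\alpha|^{|\alpha|}\le\alpha!/|\alpha|!$, yields $|\varphi(z^\alpha)|\le Cr^{|\alpha|}(\alpha!/|\alpha|!)^{1/2}$, which is exactly what is needed for your $g$ (with the correctly halved constant term) to be holomorphic on $(1/r)\mathbb B^d$; you then conclude by density of polynomials. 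Your version is more constructive and avoids both Hahn--Banach and the Riesz representation theorem, at the cost of the combinatorial estimate and an appeal to local uniform convergence of homogeneous expansions; the paper's version is shorter and has the side benefit of exhibiting every continuous functional as integration against a compactly supported measure, which is reused in the proof that $M^{+*}=O^+$. All the computations I checked --- the coefficient bound, the $H^2_d$ summability of $g_R$ for $R<1/r$, and the bookkeeping $Q(1,g)=\overline{d_0}+\overline{g(0)}=a_0$ forced by the factor $\tfrac12$ --- are correct.
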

\begin{proof}  To prove that each $Q(\cdot, g)$ induces a continuous linear functional, it suffices to prove continuity at $0$.  Since $g$ is holomorphic in a neighborhood of $\overline{\mathbb B^d}$, we can choose $r<1$ so that the dilate $g_{\frac1r}$ is still holomorphic in a neighborhood of $\overline{\mathbb B^d}$.  We may then write
$$
Q(f, g)=Q(f_r, g_{\frac1r})
$$
for all $f\in O$.  If now $f_n$ is any sequence of functions tending to $0$ uniformly on compact sets, for each $r<1$ the dilates $(f_n)_r$ converge to $0$ uniformly on a neighborhood of $\overline{\mathbb{B}^d}$.  It follows that this sequence also converges to $0$ in $H^2_d$.  We then have by Cauchy-Schwarz
\begin{align*}
|Q(f_n, g)| &\leq |f_n(0)\overline{g(0)}|+|\langle (f_n)_r, g_{\frac1r}\rangle_{H^2_d}| \\
&\leq |f_n(0)\overline{g(0)}|+\|(f_n)_r\|_{H^2_d} \|g_{\frac1r} \|_{H^2_d}
\end{align*}
which converges to $0$ as $n\to \infty$.  

For the converse, let $\varphi$ be a continuous linear functional on $O$.  Let $C=C(\mathbb{B}^d)$ be the space of continuous functions on $\mathbb{B}^d$ equipped with the topology of local uniform convergence.  The dual of $C$ is the space of finite Borel measures with compact support in $\mathbb{B}^d$.  Since $O$ is closed in $C$, by Hahn-Banach $\varphi$ extends to a continuous functional on $C$.  Thus there exists a measure with compact support $K\subset \mathbb{B}^d$ such that
$$
\varphi(f)=\int_K f\, d\mu
$$
for all $f\in O$.  Now define a function $g\in O$ by
\begin{equation}\label{E:g_half_mu}
g(z)=\frac12 \int_K \frac{1+\langle z,w\rangle}{1-\langle z,w\rangle}\, d\overline{\mu}(w)
\end{equation}
Since $K$ is at positive distance from boundary $\partial\mathbb{B}^d$, $g$ is holomorphic in a neighborhood of $\overline{\mathbb{B}^d}$.  Since the power series for $(1-\langle z,w\rangle)^{-1}$ converges uniformly on $K$, we can pass the integral inside the sum to obtain the power series expansion for $g$
$$
g(z)=\frac12 \int_K \, d\overline{\mu}(w)+\sum_{\alpha\neq 0} z^\alpha \frac{|\alpha|!}{\alpha !}\int_K \overline{w}^\alpha\, d\overline{\mu}(w).
$$
It is then readily verified that
\begin{equation}\label{E:f_mu_Q_pairing}
\varphi(f)=\int_K f\, d\mu=Q(f, g).
\end{equation}
(Indeed, the pairing $Q(\cdot, \cdot)$ is defined precisely so that (\ref{E:f_mu_Q_pairing}) holds when $g$ is of the form (\ref{E:g_half_mu}).)
\end{proof}
We now introduce the positive $M^+$, which consists of functions that can be written as Herglotz transforms of positive measures on $\partial\mathbb{B}^d$:
\begin{defn}
$f\in M^+$ if and only if there exists a finite, positive Borel measure $\mu$, supported on $\partial\mathbb B^d$, such that
$$
f(z)=\int_{\partial\mathbb B^d} \frac{1+\langle z,\zeta\rangle }{1-\langle z,\zeta\rangle}\, d\mu(\zeta) +i\Im f(0)
 $$
\end{defn}
Unlike in one dimension, $M^+$ is a proper subclass of $O^+$, however we do have the following duality relation (compare \cite[Theorem 8.2]{MR2274973}):
\begin{thm}
$M^{+*}=O^+$ and $O^{+*}=M^+$.
\end{thm}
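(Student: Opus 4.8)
The plan is to reduce both duality statements to a single explicit evaluation of the pairing $Q_r$ and then, for the one genuinely nontrivial inclusion, run a Hahn--Banach separation argument. For $\zeta\in\partial\mathbb B^d$ write $k_\zeta(z)=\frac{1+\langle z,\zeta\rangle}{1-\langle z,\zeta\rangle}$; its Taylor coefficients are $1$ at $\alpha=0$ and $\frac{2|\alpha|!}{\alpha!}\,\overline{\zeta}^{\,\alpha}$ otherwise, so substituting into (\ref{E:qr_def}) and collapsing the series gives
\[
Q_r(k_\zeta,g)=2\,\overline{g(r\zeta)}\qquad(g\in O,\ 0\le r<1),
\]
and, more generally, if $f=\int_{\partial\mathbb B^d}k_\zeta\,d\mu(\zeta)+i\,\Im f(0)\in M^+$ then inserting the Taylor coefficients of $f$ into (\ref{E:qr_def}) and applying Fubini (valid since $\mu$ is finite and the power series of $g$ converges absolutely, uniformly for $\zeta\in\partial\mathbb B^d$) gives
\[
Q_r(f,g)=2\int_{\partial\mathbb B^d}\overline{g(r\zeta)}\,d\mu(\zeta)+2i\,\Im f(0)\,\overline{g(0)} .
\]
Two elementary facts about $Q_r$ will also be used: it is conjugate symmetric, $Q_r(f,g)=\overline{Q_r(g,f)}$; and for each fixed $r<1$ the functional $f\mapsto Q_r(f,g)=Q(f_r,g)$ is continuous on $O$, since $f_r$ is holomorphic past $\partial\mathbb B^d$ (same estimates as in Lemma~\ref{L:convergence} and Theorem~\ref{T:duality}). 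As is standard here, the asserted equalities are read modulo additive imaginary constants: testing $g$ against $\pm i\in M^+\subset O^+$ forces $\Im g(0)=0$ for any $g$ in either dual class, and one normalizes $\Im f(0)=0$ accordingly; I suppress this below.

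The equality $M^{+*}=O^+$ is then immediate from the two displays. If $g\in O^+$, then for every $f\in M^+$ and $r<1$ the second display yields $\Re Q_r(f,g)=2\int_{\partial\mathbb B^d}\Re g(r\zeta)\,d\mu(\zeta)\ge 0$, because $r\zeta\in\mathbb B^d$ and $\mu\ge 0$; hence $O^+\subseteq M^{+*}$. Conversely, if $g\in M^{+*}$, then taking $f=k_\zeta\in M^+$ gives $2\Re g(r\zeta)=\Re Q_r(k_\zeta,g)\ge 0$, and since every point of $\mathbb B^d$ is of the form $r\zeta$ we conclude $\Re g\ge 0$, i.e. $g\in O^+$.

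For $O^{+*}=M^+$, one inclusion is again direct: if $g\in M^+$ has representing measure $\sigma$ and $f\in O^+$, then conjugate symmetry together with the second display (with $f$ and $g$ interchanged) gives $\Re Q_r(f,g)=\Re Q_r(g,f)=2\int_{\partial\mathbb B^d}\Re f(r\eta)\,d\sigma(\eta)\ge 0$, so $M^+\subseteq O^{+*}$. For the reverse inclusion let $g\in O^{+*}$; testing against $k_\zeta\in O^+$ already shows $g\in O^+$. Assume toward a contradiction that $g\notin M^+$. Since $M^+$ is a closed convex cone in the locally convex space $O$ and contains $0$, Hahn--Banach separation produces a continuous real-linear functional, hence a continuous complex-linear $\psi\colon O\to\mathbb C$, with $\Re\psi\ge 0$ on $M^+$ and $\Re\psi(g)<0$; by Theorem~\ref{T:duality}, $\psi(f)=Q(f,p)$ for some $p\in O(\overline{\mathbb B^d})$, and after subtracting an imaginary constant we may take $\Im p(0)=0$. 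The crucial observation is that $M^+$, being a positive class, is closed under dilations, so for each $f\in M^+$ and $r<1$ we have $f_r\in M^+$, whence $\Re Q_r(f,p)=\Re Q(f_r,p)=\Re\psi(f_r)\ge 0$; thus $p\in M^{+*}$, and by the first part $p\in O^+$. Now apply the hypothesis $g\in O^{+*}$ with $f=p$: $\Re Q_r(p,g)\ge 0$ for all $r<1$, and by conjugate symmetry $\Re Q_r(p,g)=\Re Q_r(g,p)=\Re Q(g_r,p)=\Re\psi(g_r)$. Letting $r\to 1^-$ and using $g_r\to g$ in $O$ with $\psi$ continuous gives $\Re\psi(g)\ge 0$, contradicting $\Re\psi(g)<0$; hence $g\in M^+$.

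The main obstacle is the inclusion $O^{+*}\subseteq M^+$, and inside it the step showing that a functional $\psi$ which is merely nonnegative on $M^+$ at $r=1$ is automatically nonnegative on $M^+$ at every dilation level, so that its symbol $p$ lands back in $M^{+*}=O^+$ and can then be paired against $g$. This is exactly where dilation-invariance of positive classes is used, and it is what closes the loop; the remaining points (the imaginary-constant normalization, and continuity of $Q_r(\cdot,g)$ on $O$ for $r<1$) are routine.
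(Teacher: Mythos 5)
Your proposal is correct and follows essentially the same route as the paper: the explicit evaluation $Q_r(k_\zeta,g)=2\overline{g(r\zeta)}$ (equivalently, $\Re Q_r(f,g)=\Re\int f_r\,d\mu$ for $g\in M^+$) gives $M^{+*}=O^+$ and $M^+\subseteq O^{+*}$, exactly as in the text. The only difference is that where the paper disposes of $O^{+*}\subseteq M^+$ by citing Theorem~\ref{T:double_dual}, you inline that theorem's cone-separation argument (separate $g$ from the closed cone $M^+$, represent the functional as $Q(\cdot,p)$ via Theorem~\ref{T:duality}, use dilation-invariance to place $p$ in $M^{+*}=O^+$, and let $r\to1$), which is precisely the paper's own proof of the double-dual theorem specialized to $\mathcal P=M^+$.
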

\begin{proof}
By a calculation similar to that in the proof of Theorem~\ref{T:duality}, if $g\in M^+$ and $f\in O$ we have
$$
Q_r(f, g) =\int_{\partial\mathbb{B}^d} f_r\, d\mu
$$
The real part of this is positive for all $\mu$ and all $r<1$ if and only if $f\in O^+$.  The opposite duality will follow from Theorem~\ref{T:double_dual} below.  
\end{proof}

\begin{thm}\label{T:double_dual}
If $\mathcal{P}\supset M^+$ is a positive class, then $\mathcal P^*$ is a positive class and $\mathcal P^{**}=\mathcal P$.
\end{thm}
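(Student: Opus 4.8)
The plan is to establish the two assertions in turn: that $\mathcal{P}^*$ is a positive class, and that $\mathcal{P}^{**}=\mathcal{P}$. For the first I would verify the three defining properties directly. Fix $f\in O$ and $r\in[0,1)$; the map $g\mapsto Q_r(f,g)$ is conjugate-linear and continuous on $O$, since writing it (as in the proof of Lemma~\ref{L:convergence}) in the form $\langle f_{\sqrt r},g_{\sqrt r}\rangle_{H^2_d}+f(0)\overline{g(0)}$, local uniform convergence $g_n\to 0$ forces $(g_n)_{\sqrt r}\to 0$ in $H^2_d$ and hence $Q_r(f,g_n)\to 0$. Thus each $\{g:\Re Q_r(f,g)\geq 0\}$ is a closed half-space through the origin, and $\mathcal{P}^*$, being the intersection of these over $f\in\mathcal{P}$ and $r\in[0,1)$, is a closed convex cone in $O$. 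That $\mathcal{P}^*\subseteq O^+$ is where the hypothesis $M^+\subseteq\mathcal{P}$ enters: it yields $\mathcal{P}^*\subseteq (M^+)^*=O^+$ by the preceding theorem. Finally, from the series definition (\ref{E:qr_def}) one computes $Q_r(f,g_s)=Q_{rs}(f,g)$ for real $s\in[0,1]$, and since $rs\in[0,1)$ this shows $\mathcal{P}^*$ is closed under dilations. Hence $\mathcal{P}^*$ is a positive class.

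The inclusion $\mathcal{P}\subseteq\mathcal{P}^{**}$ is then immediate from the Hermitian symmetry $Q_r(g,f)=\overline{Q_r(f,g)}$, visible from (\ref{E:qr_def}): if $f\in\mathcal{P}$ then $\Re Q_r(g,f)=\Re Q_r(f,g)\geq 0$ for every $g\in\mathcal{P}^*$ and every $r\in[0,1)$, i.e. $f\in(\mathcal{P}^*)^*$.

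For the reverse inclusion $\mathcal{P}^{**}\subseteq\mathcal{P}$ I would argue by contraposition, using Hahn--Banach separation together with Theorem~\ref{T:duality}. Let $f_0\in O\setminus\mathcal{P}$. Since $O$ is locally convex and $\mathcal{P}$ is a closed convex cone, there is a continuous real-linear $L:O\to\mathbb{R}$ and $\gamma\in\mathbb{R}$ with $L(f_0)<\gamma\leq L(f)$ for all $f\in\mathcal{P}$; applying this to $tf\in\mathcal{P}$ and letting $t\to 0^+$ and $t\to\infty$ forces $\gamma\leq 0$ and $L\geq 0$ on $\mathcal{P}$, so $L(f_0)<0\leq L(f)$ on $\mathcal{P}$. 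Write $L=\Re\varphi$ for a continuous complex-linear $\varphi$; by Theorem~\ref{T:duality}, $\varphi(f)=Q(f,g)$ for some $g\in O(\overline{\mathbb{B}^d})$. Because $g$ is holomorphic past the boundary, $Q_r(f,g)=Q(f_r,g)$ for every $f\in O$ and $r\in[0,1)$, and since $\mathcal{P}$ is dilation-invariant, $f\in\mathcal{P}$ gives $f_r\in\mathcal{P}$ and thus $\Re Q_r(f,g)=L(f_r)\geq 0$; hence $g\in\mathcal{P}^*$. On the other hand $Q(\cdot,g)$ is continuous on $O$ and $(f_0)_r\to f_0$ locally uniformly as $r\to 1^-$, so $Q_r(f_0,g)=Q((f_0)_r,g)\to Q(f_0,g)$, whose real part is $L(f_0)<0$; therefore $\Re Q_r(f_0,g)<0$ for some $r\in[0,1)$, which exhibits $f_0\notin\mathcal{P}^{**}$.

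The main obstacle is the mismatch between the parameter range $r\in[0,1)$ in the definition of the dual and the boundary pairing $Q(\cdot,\cdot)=Q_1(\cdot,\cdot)$ delivered by the separation theorem. This is exactly where the dilation-invariance built into the notion of a positive class does the work: it simultaneously places the separating function $g$ into $\mathcal{P}^*$ (via $f_r\in\mathcal{P}$ whenever $f\in\mathcal{P}$) and, combined with the continuity supplied by Theorem~\ref{T:duality}, transfers the strict inequality at $r=1$ to a strict inequality at some admissible $r<1$. A secondary point to watch is that the representative $g$ from Theorem~\ref{T:duality} is holomorphic on a neighborhood of $\overline{\mathbb{B}^d}$, which is precisely what legitimizes the identity $Q_r(f,g)=Q(f_r,g)$ for all $f\in O$.
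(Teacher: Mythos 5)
Your proof is correct and follows essentially the same route as the paper's: Hahn--Banach separation of the closed convex cone $\mathcal{P}$, representation of the separating functional as $Q(\cdot,g)$ via Theorem~\ref{T:duality}, the identity $Q_r(f,g)=Q(f_r,g)$ together with dilation-invariance to place $g$ in $\mathcal{P}^*$, and continuity to push the strict inequality down to some $r<1$. You simply spell out the verifications (closedness, convexity, dilation-invariance of $\mathcal{P}^*$, and the normalization of the separating functional) that the paper leaves implicit.
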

\begin{proof}
Since $\mathcal{P}\supset M^+$, we have $\mathcal P^*\subset O^+$.  All of the remaining properties of a positive class now follow from the definition of the pairings $Q_r$.   

It is immediate from definitions that $\mathcal P\subset \mathcal P^{**}$.  The reverse inclusion is proved by a cone separation argument.  If $h\in \mathcal P^{**}\setminus \mathcal P$, since $\mathcal P$ is a closed, convex cone we can find a linear functional $\varphi$ on $O$ such that $\Re  \varphi(f)\geq 0$ for all $f\in \mathcal P$ but $\Re  \varphi(h)<0$.  By the duality theorem there exist $g\in O$ such that $\varphi(f)=Q(f,g)$ for all $f\in O$.  Since $\mathcal P$ is a positive class, we have $Q_r(f, g) =Q(f_r, g)=\varphi(f_r)$ for all $f\in\mathcal P$; taking real parts it follows that $g\in \mathcal P^*$.  But then $\Re Q_r(h, g)<0$ for $r$ sufficiently close to $1$, contradicting $h\in\mathcal P^{**}$.  Thus $\mathcal P^{**}\subset \mathcal P$.
\end{proof}
\begin{thm}\label{T:self_dual}
Suppose $\mathcal{P}\supset M^+$ is a positive class with $\mathcal{P}\subset\mathcal{P}^*$.  Then there exists a positive class $\mathcal W$ such that 
$$
\mathcal P \subset \mathcal W \subset \mathcal P^*
$$
and $\mathcal W=\mathcal W^*$.
\end{thm}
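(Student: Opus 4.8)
The plan is to obtain $\mathcal W$ as a maximal element, via Zorn's Lemma, of the family
$$
\mathcal F=\{\mathcal Q\subset O:\mathcal Q\text{ is a positive class},\ \mathcal P\subset\mathcal Q\subset\mathcal P^*,\ \mathcal Q\subset\mathcal Q^*\},
$$
ordered by inclusion. Note that $\mathcal P^*$ is itself a positive class by Theorem~\ref{T:double_dual} (since $\mathcal P\supset M^+$), and that $\mathcal F$ is nonempty because $\mathcal P\in\mathcal F$, by the hypothesis $\mathcal P\subset\mathcal P^*$. Granting a maximal $\mathcal W\in\mathcal F$, I will argue that maximality forces $\mathcal W^*\subset\mathcal W$; together with $\mathcal W\subset\mathcal W^*$ this yields $\mathcal W=\mathcal W^*$, while $\mathcal P\subset\mathcal W\subset\mathcal P^*$ is part of being in $\mathcal F$. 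Throughout I use that the pairings $Q_r$ ($r\in[0,1)$) are well defined by Lemma~\ref{L:convergence}, that $Q_r(a,b)=\overline{Q_r(b,a)}$, and that $f\mapsto Q_r(f,g)$ is continuous on $O$ for each fixed $g$ (it factors through $f\mapsto f_{\sqrt r}\in H^2_d$, as in the proof of Theorem~\ref{T:duality}).

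First I would check that every chain $\{\mathcal Q_i\}$ in $\mathcal F$ has an upper bound. The union $U=\bigcup_i\mathcal Q_i$ is a convex cone closed under dilations (any two of its members lie in a common $\mathcal Q_i$), so its closure $\mathcal Q$ in $O$ is again a convex cone closed under dilations, and lies in $\overline{O^+}=O^+$; thus $\mathcal Q$ is a positive class with $\mathcal P\subset\mathcal Q\subset\mathcal P^*$, the latter because $\mathcal P^*$ is closed. For $\mathcal Q\subset\mathcal Q^*$ I use a two-step density argument: for fixed $g\in U$ the closed set $\{f:\Re Q_r(f,g)\geq 0\text{ for all }r\in[0,1)\}$ contains $U$, hence $\mathcal Q$; then for fixed $f\in\mathcal Q$, using $Q_r(a,b)=\overline{Q_r(b,a)}$, the closed set $\{g:\Re Q_r(f,g)\geq 0\text{ for all }r\}$ contains $U$, hence $\mathcal Q$. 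So $\Re Q_r(f,g)\geq 0$ for all $f,g\in\mathcal Q$, i.e. $\mathcal Q\in\mathcal F$, and Zorn's Lemma produces a maximal $\mathcal W\in\mathcal F$.

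Now suppose for contradiction that $\mathcal W\subsetneq\mathcal W^*$ and pick $h\in\mathcal W^*\setminus\mathcal W$; let $\mathcal W_1=\mathcal P(\mathcal W\cup\{h\})$, which strictly contains $\mathcal W$. Since $\mathcal W^*$ is a positive class (Theorem~\ref{T:double_dual}) containing $\mathcal W\cup\{h\}$, we have $\mathcal W_1\subset\mathcal W^*\subset\mathcal P^*$ (the last inclusion because $\mathcal P\subset\mathcal W$). It remains to check $\mathcal W_1\subset\mathcal W_1^*$; then $\mathcal W_1\in\mathcal F$ strictly contains $\mathcal W$, contradicting maximality. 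One sees directly that $\mathcal W_1$ is the closure of the convex cone of finite sums $w+\sum_i c_ih_{s_i}$ with $w\in\mathcal W$, $c_i\geq 0$, $s_i\in[0,1]$, so by the density argument of the previous paragraph it suffices to verify $\Re Q_r(u,v)\geq 0$ for two such sums $u=w+\sum_ic_ih_{s_i}$, $v=w'+\sum_jd_jh_{t_j}$. Expanding the pairing and using the identity $Q_r(f,h_t)=Q_{rt}(f,h)$ (immediate from the definition~(\ref{E:qr_def})), each resulting term has nonnegative real part: $\Re Q_r(w,w')\geq 0$ because $w'\in\mathcal W\subset\mathcal W^*$; $\Re Q_r(w,h_{t_j})=\Re Q_{rt_j}(w,h)\geq 0$ because $h\in\mathcal W^*$ and $rt_j\in[0,1)$; $\Re Q_r(h_{s_i},w')=\Re Q_r(w',h_{s_i})=\Re Q_{rs_i}(w',h)\geq 0$ similarly; and $Q_r(h_{s_i},h_{t_j})=Q_{rs_it_j}(h,h)$ is a sum of nonnegative terms. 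Taking closures gives $\mathcal W_1\subset\mathcal W_1^*$, the desired contradiction, so $\mathcal W^*\subset\mathcal W$ and hence $\mathcal W=\mathcal W^*$.

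The routine verifications — that $O^+$ is closed in $O$, that closures of dilation-invariant convex cones are again such, and the explicit description of $\mathcal P(\mathcal W\cup\{h\})$ — are of the kind already used in Lemma~\ref{L:convergence} and Theorems~\ref{T:duality} and~\ref{T:double_dual}. I expect the main obstacle to be the maximality step: one must ensure that adjoining a single function $h\in\mathcal W^*$ to $\mathcal W$ and generating a positive class does not destroy the property $\mathcal Q\subset\mathcal Q^*$, and it is precisely there that the behavior of $Q_r$ under dilation ($Q_r(f,h_t)=Q_{rt}(f,h)$) together with the positivity of the diagonal pairing $Q_r(h,h)\geq 0$ are essential.
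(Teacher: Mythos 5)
Your proposal is correct and follows essentially the same route as the paper: a Zorn's lemma argument on the family of positive classes $\mathcal Q$ with $\mathcal P\subset\mathcal Q\subset\mathcal P^*$ and $\mathcal Q\subset\mathcal Q^*$ (taking the closure of the union along a chain), followed by the observation that a maximal element must equal its dual, since adjoining $h\in\mathcal W^*\setminus\mathcal W$ and expanding $Q_r$ on the dense cone of sums $w+\sum c_i h_{s_i}$ yields four types of terms, each of nonnegative real part. Your explicit use of $Q_r(f,h_t)=Q_{rt}(f,h)$ and the symmetry $Q_r(a,b)=\overline{Q_r(b,a)}$ just makes precise what the paper leaves as ``from the definition of $Q_r$.''
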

\begin{proof}
Let $\mathcal U$ denote the collection of all positive classes $\mathcal W$ such that $\mathcal P \subset \mathcal W \subset \mathcal P^*$ and $\mathcal W\subset \mathcal W^*$.  The collection $\mathcal U$ is nonempty (it contains $\mathcal P$) and partially ordered by inclusion.   We apply Zorn's lemma to obtain a maximal element:  if $\{\mathcal W_j\}_{j\in J}$ is an increasing chain in $\mathcal U$, define $\mathcal W_\infty =\overline{\cup_j \mathcal W_j}$.  It is clear that $\mathcal W_\infty$ is a closed, convex cone in $O^+$.  Moreover, if $f\in\mathcal W_\infty$ and $r<1$, we have a sequence $f_j\to f$ locally uniformly, with each $f_j$ in some $\mathcal W_j$.  Since each $\mathcal W_j$ is a positive class, $(f_j)_r\in \mathcal W_j$ and $(f_j)_r\to f_r$, so $f_r\in \mathcal W_\infty$ and hence $\mathcal W_\infty$ is a positive class.  It remains to check that $\mathcal W_\infty \subset \mathcal W_\infty^*$.  But it is straightforward to check that $\mathcal W_\infty^* =\cap_j \mathcal W_j^*$, and from this it follows that $\mathcal W_\infty$ belongs to $\mathcal U$.  Thus every increasing chain in $\mathcal U$ has an upper bound, so Zorn's lemma gives a maximal element $\mathcal W$.

To see that $\mathcal W =\mathcal W^*$, we proceed by contradiction.  If $f\in \mathcal W^*\setminus \mathcal W$, then we form a new positive class $\mathcal W_f$, the positive class generated by $\mathcal W$ and $f$.  We now check that $\mathcal W_f \subset \mathcal W_f^*$, contradicting the maximality of $\mathcal W$.  

The set of functions of the form
$$
g=g_0 + \sum_{j=1}^n c_j f_{r_j}
$$
with $g_0\in\mathcal W$, $c_j >0$ and $r_j\in[0,1)$, is dense in $\mathcal W_f$.  We check that $\Re Q_r(g,h)\geq 0$ for all $g, h\in \mathcal W_f$ of the above form; positivity for all $g,h\in \mathcal W_f$ follows by taking limits.  Letting 
$$
h=h_0 + \sum_{k=1}^m d_k f_{s_k}
$$
we have for $r<1$
\begin{align}
Q_r(g,h) &= Q_r(g_0,h_0) + \\ &+Q_r(g_0,\sum_{k=1}^m d_k f_{s_k})\\ &+Q_r( \sum_{j=1}^n c_j f_{r_j}, h_0) \\&+ Q_r(\sum_{j=1}^n c_j f_{r_j} ,\sum_{k=1}^m d_k f_{s_k} ) 
\end{align}
The first term has positive real part because $\mathcal W\subset \mathcal W^*$, the next two because $f\in \mathcal W^*$, and the last from the definition of $Q_r$.
\end{proof}
\section{Operator-valued Herglotz kernels}

A \emph{row contraction} is a $d$-tuple of operators $T=(T_1, \dots T_d)$ on a Hilbert space $\mathcal H$ such that
$$
I-T_1T_1^* -\cdots -T_dT_d^* \geq 0
$$
If $T$ is a row contraction and $z\in\mathbb{B}^d$, then the operator
$$
\langle z, T\rangle := z_1 T_1 +\cdots z_d T_d
$$
is a strict contraction.  We may then define an operator-valued Herglotz kernel
\begin{align*}
H(z, T):&= 2(I-\langle z, T\rangle )^{-1}  -I \\
&=(I+\langle z, T\rangle )(I-\langle z, T\rangle )^{-1}
\end{align*}
which we view as an analytic function on $\mathbb{B}^d$ taking values in $\mathcal B (\mathcal H)$.  A straightforward calculation shows 
\begin{align*}
\Re H(z, T)=2 (I-\langle z, T\rangle)^{-1}(I-\langle z, T\rangle \langle z, T\rangle ^*)(I-\langle z, T\rangle^*)^{-1}\geq 0
\end{align*}
Hence if $\rho$ is a positive linear functional on the C*-algebra generated by $T$, the holomorphic function
\begin{equation}\label{E:nc_herglotz}
f(z)=\rho(H(z, T))
\end{equation}
has positive real part on $\mathbb B^d$.  We will call such $f$ the \emph{Herglotz transform} of the pair $(\rho,T)$.  Theorem~\ref{T:mccp} characterizes the positive class $S^+$ in terms of Herglotz transforms.

One may observe that it is not really necessary to assume that $T$ is a row contraction for the above calculation to be valid; rather, for $H(z, T)$ to be well-defined (and have nonnegative real part) it is only necessary that 
$$
I-\langle z, T\rangle \langle z, T\rangle ^* \geq 0
$$
for all $z\in \mathbb B^d$.  This is easily seen to be equivalent to the condition that 
\begin{equation}\label{E:weak_rc}
\|\langle \zeta, T\rangle \| \leq 1 \quad \text{ for all } \zeta \in\partial\mathbb B^d.
\end{equation}
  It turns out, however, that there is no loss of generality in assuming that $T$ is a row contraction.  More precisely, we have:
\begin{thm}
If $T=T_1,\dots T_d$ is a $d$-tuple satisfying (\ref{E:weak_rc}) and $\rho$ is a positive linear functional on $C^*(T)$, then the Herglotz transform
$$
f(z)=\rho(H(z,T))
$$
belongs to $S^+$ (and is hence the Herglotz transform of a pair $(\sigma, U)$ where $U$ is a row contraction). 
\end{thm}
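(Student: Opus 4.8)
The plan is to reduce the statement to McCarthy and Putinar's theorem by manufacturing, out of the pair $(\rho,T)$, a Cuntz-algebra representation that reproduces $f$. Concretely, I would first replace $\rho$ by its GNS form: there is no loss in assuming $\rho=\langle\,\cdot\,\xi,\xi\rangle$ for a vector $\xi$ in the Hilbert space $\mathcal H$ on which $T$ acts, and since a $*$-representation does not increase operator norms, the hypothesis (\ref{E:weak_rc}) survives this passage. The target is then a Hilbert space $\mathcal K\supseteq\mathcal H$ together with a $d$-tuple $V=(V_1,\dots,V_d)$ of isometries on $\mathcal K$ with pairwise orthogonal ranges and $\sum_j V_jV_j^{*}=I$ (so that $C^{*}(V)$ is a quotient of $\mathcal O_d$) such that $P_{\mathcal H}V_w|_{\mathcal H}=T_w$ for every word $w$ in the letters $1,\dots,d$. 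Granting such a $V$, one gets $P_{\mathcal H}H(z,V)|_{\mathcal H}=H(z,T)$ by expanding both Herglotz kernels in their (norm-convergent) power series, hence
$$
f(z)=\langle H(z,T)\xi,\xi\rangle=\langle H(z,V)\xi,\xi\rangle=\sigma(H(z,V)),
$$
where $\sigma$ is the positive functional on $\mathcal O_d$ induced by the vector state at $\xi$; since $f(0)=\rho(I)$ is real, Theorem~\ref{T:mccp} yields $f\in S^{+}$ with $t=0$, and in particular exhibits $f$ as the Herglotz transform of a pair $(\sigma,V)$ with $V$ a row contraction.

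The step I expect to be the main obstacle is reducing the construction of $V$ to the case in which $T$ is itself a row contraction, $\sum_j T_jT_j^{*}\le I$ — recall that (\ref{E:weak_rc}) really is weaker, since it only demands that each operator $\langle\zeta,T\rangle$ be contractive. One concrete move available here is inflation: taking $\zeta=e_j$ in (\ref{E:weak_rc}) gives $\|T_j\|\le1$ for every $j$, so the tuple $\widetilde T=(T_1\otimes M_{\zeta_1},\dots,T_d\otimes M_{\zeta_d})$ on $\mathcal H\otimes L^{2}(\partial\mathbb B^{d},\sigma)$, with $M_{\zeta_j}$ multiplication by the $j$-th coordinate function, is a row contraction because
$$
\sum_j\widetilde T_j\widetilde T_j^{*}=\sum_j T_jT_j^{*}\otimes M_{|\zeta_j|^{2}}\le\sum_j I\otimes M_{|\zeta_j|^{2}}=I\otimes M_{\,\sum_j|\zeta_j|^{2}}=I .
$$
Since $H(z,\widetilde T)=I+2\sum_{\beta\ne0}z^{\beta}\,S_{\beta}\otimes M_{\zeta^{\beta}}$, where $S_\beta$ is the $\beta$-homogeneous part $\sum_{w\leftrightarrow\beta}T_w$ of $\langle z,T\rangle$, compressing against $\rho\otimes\omega_{\mathbf a}$ with $\omega_{\mathbf a}$ the invariant Poisson functional at a point $\mathbf a\in\mathbb B^{d}$ returns $\rho\bigl(H\bigl((z_1a_1,\dots,z_da_d),T\bigr)\bigr)$; with $\mathbf a=(r,\dots,r)$ this recovers the dilates $f_r$ for $r<1/\sqrt d$. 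Because $S^{+}$ is a positive class, hence closed under locally uniform limits and under dilation, this handles a neighbourhood of dilates; pushing the argument so as to recover $f$ itself — i.e. completing the reduction to row contractions — is the delicate point, and is where the full strength of (\ref{E:weak_rc}) (as against the row-contraction condition) has to be used, presumably through a limiting argument on the inflated tuples.

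Once $T$ is a row contraction the tuple $V$ can be assembled by two standard dilations. First, by the Frazho–Bunce–Popescu theorem, $T$ admits a minimal isometric dilation: a $d$-tuple $V'$ of isometries with pairwise orthogonal ranges on a space $\mathcal K'\supseteq\mathcal H$, with $\mathcal H$ coinvariant for $V'$ and $P_{\mathcal H}V'_w|_{\mathcal H}=T_w$ for all words $w$. Second, a row isometry is a representation of the Cuntz–Toeplitz algebra, and every such representation dilates to one of $\mathcal O_d$: writing $\mathcal D=\mathcal K'\ominus\bigoplus_j V'_j\mathcal K'$ for the defect space, one adjoins a (large, say infinite-dimensional) auxiliary space $\widehat{\mathcal K}$ and defines isometries $V_j$ on $\mathcal K=\mathcal K'\oplus\widehat{\mathcal K}$ that restrict to $V'_j$ on the now-invariant subspace $\mathcal K'$ and whose ranges, together with $\bigoplus_j V'_j\mathcal K'$, exhaust $\mathcal K$; this can always be arranged, and then $V_i^{*}V_j=\delta_{ij}I$ and $\sum_j V_jV_j^{*}=I$. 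Finally, since $\mathcal K'$ is invariant under every $V_j$ with $V_j|_{\mathcal K'}=V'_j$, for $h\in\mathcal H$ one has $V_wh=V'_wh\in\mathcal K'$, so $P_{\mathcal H}V_w|_{\mathcal H}=P_{\mathcal H}V'_w|_{\mathcal H}=T_w$, which is exactly the identity needed in the first paragraph. I would record along the way the alternative, more computational, route — trying to show directly that $\frac{f(z)+\overline{f(w)}}{1-\langle z,w\rangle}=2\rho\!\left((I-\langle z,T\rangle)^{-1}\tfrac{I-\langle z,T\rangle\langle w,T\rangle^{*}}{1-\langle z,w\rangle}(I-\langle w,T\rangle^{*})^{-1}\right)$ is positive semidefinite — but I expect this to be insufficient on its own, because the operator kernel $\tfrac{I-\langle z,T\rangle\langle w,T\rangle^{*}}{1-\langle z,w\rangle}$ need not be positive (it is essentially the Schur–Agler, not the supremum-norm, condition), so the gain genuinely comes from compressing to a state before demanding positivity — which is precisely what the Cuntz dilation accomplishes.
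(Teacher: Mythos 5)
Your overall strategy --- dilate $T$ to a Cuntz family $V$ and invoke Theorem~\ref{T:mccp} --- is not the paper's, and it founders exactly where you say it does. The entire content of the theorem is that condition (\ref{E:weak_rc}), which is strictly weaker than the row-contraction inequality $\sum_j T_jT_j^*\leq I$, already suffices; once $T$ \emph{is} a row contraction the conclusion is immediate from Popescu's dilation theorem together with Theorem~\ref{T:mccp}, as the paper notes in the paragraph preceding the statement. Your inflation $\widetilde T_j=T_j\otimes M_{\zeta_j}$ is a correct way to manufacture a row contraction, but compressing by $\rho\otimes\omega_{\mathbf a}$ only yields $\rho(H((a_1z_1,\dots,a_dz_d),T))$ for $\mathbf a\in\mathbb B^d$, hence only the dilates $f_r$ with $r<1/\sqrt d$. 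Membership of $f_r$ in $S^+$ for $r$ in a proper subinterval of $(0,1)$ gives no information about $f$: positive classes are closed under dilation and under limits, so one would need $f_r\in S^+$ for \emph{all} $r<1$, and no choice of $\mathbf a$ in the open ball (nor any limit of such, since $(1,\dots,1)\notin\overline{\mathbb B^d}$ when $d>1$) produces $f$ itself. The ``limiting argument on the inflated tuples'' you defer to is therefore not a technical detail but the missing proof.

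The irony is that you dismiss the route the paper actually takes. The paper verifies the definition of $S^+$ directly: the positivity of the scalar kernel $\frac{f(z)+\overline{f(w)}}{1-\langle z,w\rangle}$ for every positive $\rho$ reduces, via the factorization
$$
H(z,T)+H(w,T)^*=2(I-\langle z,T\rangle)^{-1}\bigl(I-\langle z,T\rangle\langle w,T\rangle^*\bigr)(I-\langle w,T\rangle^*)^{-1},
$$
to positivity of $k_T(z,w)=\frac{I-\langle z,T\rangle\langle w,T\rangle^*}{1-\langle z,w\rangle}$ tested one vector $\xi$ at a time. For fixed $\xi$ the compressed kernel is $\frac{1-\langle A^{1/2}z,A^{1/2}w\rangle}{1-\langle z,w\rangle}$ where $A$ is the $d\times d$ Gram matrix built from the $T_j$ acting on $\xi$, and (\ref{E:weak_rc}) says \emph{precisely} that every such Gram matrix is a contraction on $\mathbb C^d$ --- so the supremum-norm hypothesis is exactly calibrated to make these scalar kernels positive. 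Your closing remark that ``the gain genuinely comes from compressing to a state before demanding positivity'' is the right instinct, but you then conclude the kernel computation is insufficient and retreat to a dilation argument that needs the stronger row-contraction hypothesis you cannot supply. As written, your argument proves the theorem only for row contractions, which is the already-known case.
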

\begin{proof}
From the definition of $S^+$, it will suffice to prove that if $T$ satisfies (\ref{E:weak_rc}), then the operator-valued kernel
$$
\frac{H(z,T) +H(w, T)^*}{1-\langle z, w\rangle}
$$ 
is positive semidefinite on $\mathbb B^d$.  After some algebraic manipulations we find that the positivity of this kernel is equivalent to the positivity of the kernel
$$
k_T(z,w)=\frac{1-\langle z, T\rangle \langle w, T\rangle^*}{1-\langle z, w\rangle}
$$
Suppose, then, that $T_1, \dots T_d$ are operators on a Hilbert space $\mathcal H$ and 
$$
\|\sum_{j=1}^d \zeta_j T_j\|\leq 1
$$
for all $\zeta\in\partial\mathbb B^d$.  Thus for every unit vector $\xi\in\mathcal H$, we find
$$
\sum_{i=1}^d\sum_{j=1}^d \langle T_i\xi, T_j\xi\rangle_{\mathcal H} \zeta_i \overline{\zeta_j} \leq 1
$$
for all unit vectors $\zeta\in\mathbb C^d$.  In other words, the positive semidefinite $d\times d$ matrix
$$
A=(a_{ij})=\langle T_i\xi, T_j\xi\rangle
$$
is contractive on $\mathbb C^d$.  Consider now the kernel $k_T$; this operator-valued kernel is positive semidefinite if and only if the scalar-valued kernel
$$
\langle k_T(z,w)\xi, \xi\rangle_{\mathcal H} 
$$
is positive for all unit vectors $\xi\in\mathcal H$.  From the definition of the matrix $A$, this kernel is equal to 
$$
\frac{1-\langle A^{1/2}z, A^{1/2}w\rangle }{1-\langle z, w\rangle}
$$
which is positive since $\|A^{1/2}\|\leq 1$ on $\mathbb C^d$.
\end{proof}
Three particular row contractions (and the C*-algebras they generate) will be of interest.  First, we may view the commutative C*-algebra $C(\partial\mathbb{B}^d)$ as the universal C*-algebra generated by a $d$-tuple of commuting normal elements $Z=(Z_1, \dots Z_d)$ satisfying
$$
I-Z_1 Z_1^*-\cdots -Z_dZ_d^* =0
$$
(these are called \emph{spherical operators} in \cite{MR1668582}; one should think of these ``universal generators'' simply as the coordinate functions $\zeta_1, \dots \zeta_d$ in $C(\partial\mathbb B^d)$).  Second, we consider the row contraction $S=(S_1, \dots S_d)$ where $S_j $ is multiplication by $z_j$ on the space $H^2_d$.  These generate the \emph{$d$-variable Toeplitz algebra} $\mathcal T_d$.  Finally, we have the $d$-tuple of \emph{Cuntz isometries}, which are isometries with orthogonal ranges whose final projections sum to $I$.  That is, $d$-tuples $V=(V_1, \dots V_d)$ such that
$$
V_i^* V_j =\delta_{ij}, \qquad I-V_1V_1^* - \cdots -V_dV_d^* =0.
$$
The \emph{Cuntz C* algebra} $\mathcal O_d$ is the universal C*-algebra generated by the Cuntz isometries.
We now have three positive classes $M^+, R^+$ and $S^+$ which can be expressed in terms of operator-valued Herglotz formulae.  The classes $M^+$ and $S^+$ were described in \cite{MR2274973}.
\begin{thm}\label{T:herglotz_formulas}
Each of the following is a positive class on $\mathbb B^d$:
\begin{itemize} 
\item $M^+$, consisting of all functions of the form
$$
f(z)=\rho(H(z, Z))+i\Im f(0)
$$
where $\rho$ is a positive linear functional on $C(\partial\mathbb{B}^d)$,

\item $R^+$, consisting of all functions of the form
$$
f(z)=\rho(H(z, S))+i\Im f(0)
$$
where $\rho$ is a positive linear functional on $\mathcal T_d$,

\item $S^+$, consisting of all functions of the form
$$
f(z)=\rho(H(z, V))+i\Im f(0)
$$
where $\rho$ is a positive linear functional on $\mathcal O_d$.
\end{itemize}
\end{thm}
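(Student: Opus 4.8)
The plan is to verify, for each of the three classes, the three ingredients of a positive class: containment in $O^+$, being a closed convex cone in $O$, and invariance under the dilations $f\mapsto f_r$. Fix one of the pairs $(A,T)$ of a separable C*-algebra and its generating tuple, namely $(C(\partial\mathbb B^d),Z)$, $(\mathcal T_d,S)$, or $(\mathcal O_d,V)$; in each case $T$ is a row contraction, so $\|\langle z,T\rangle\|\le|z|<1$ on $\mathbb B^d$ and the Neumann series exhibits $H(z,T)=2(I-\langle z,T\rangle)^{-1}-I$ as an element of $A$ for every $z\in\mathbb B^d$. I read the defining condition as: $f$ lies in the class if and only if $f(z)=\rho(H(z,T))+it$ for some positive linear functional $\rho$ on $A$ and some $t\in\mathbb R$, where necessarily $t=\Im f(0)$ since $\rho(H(0,T))=\rho(I)\ge0$. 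With this reading, containment in $O^+$ is immediate from the identity $\Re H(z,T)\ge0$ computed earlier in the section together with positivity of $\rho$; and the class is a convex cone containing $0$ because $(\rho,t)\mapsto\rho(H(\cdot,T))+it$ is linear and the admissible pairs $(\rho,t)$ form a convex cone.

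The substantive point is closure under dilation. Since $H(rz,T)=H(z,rT)$ we have $f_r(z)=\rho(H(z,rT))+it$, so it suffices to construct, for each $r\in[0,1)$, a unital positive map $\Gamma_r\colon A\to A$ with $\Gamma_r(H(z,T))=H(z,rT)$ for all $z$; then $f_r=(\rho\circ\Gamma_r)(H(\cdot,T))+it$ lies in the class. The device is Poisson averaging over the gauge action: in each case $A$ carries a point-norm continuous action $\gamma$ of $\mathbb T$ by $*$-automorphisms with $T_j\mapsto\lambda T_j$ --- on $C(\partial\mathbb B^d)$ it is $(\gamma_\lambda g)(\zeta)=g(\lambda\zeta)$, on $\mathcal T_d$ it is conjugation by the unitary $h\mapsto h(\lambda\,\cdot)$ on $H^2_d$, and on $\mathcal O_d$ it is the gauge automorphism $V_j\mapsto\lambda V_j$ supplied by universality. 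With $P_r$ the Poisson kernel on $\mathbb T$ (a positive function with $\int P_r\,dm=1$ and $\int\lambda^k P_r(\lambda)\,dm(\lambda)=r^k$), set
$$
\Gamma_r(a)=\int_{\mathbb T}\gamma_\lambda(a)\,P_r(\lambda)\,dm(\lambda).
$$
Then $\Gamma_r$ is unital and positive (indeed completely positive), a degree-$k$ monomial in the generators is scaled by $r^k$, and summing the Neumann series gives $\Gamma_r((I-\langle z,T\rangle)^{-1})=(I-r\langle z,T\rangle)^{-1}$, i.e. $\Gamma_r(H(z,T))=H(z,rT)$. (For $S^+$ one could instead argue straight from the kernel characterization of Theorem~\ref{T:mccp}, since $\frac{1-r^2\langle z,w\rangle}{1-\langle z,w\rangle}$ is a positive kernel and the Schur product of positive kernels is positive; but the averaging argument disposes of all three uniformly.)

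Finally, closure in $O$: since $O$ is metrizable it is enough to be sequentially closed. Given $f_n=\rho_n(H(\cdot,T))+it_n$ in the class with $f_n\to f$ locally uniformly, we have $t_n=\Im f_n(0)\to\Im f(0)$ and $\|\rho_n\|=\rho_n(I)=\Re f_n(0)\to\Re f(0)$, so $(\rho_n)$ is bounded in $A^*$; separability of $A$ makes the weak-$*$ topology metrizable on bounded sets, so along a subsequence $\rho_{n_k}\to\rho$ weak-$*$ with $\rho\ge0$, and evaluating at the fixed element $H(z,T)\in A$ gives $f(z)=\rho(H(z,T))+i\Im f(0)$ for every $z$, so $f$ is in the class. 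The step I expect to be the main obstacle is the dilation argument --- in particular identifying the gauge actions on the three algebras (the $\mathcal T_d$ and $\mathcal O_d$ cases each need a moment's justification, the former via a unitary on $H^2_d$, the latter via universality) and checking that $\Gamma_r$ intertwines the operator-valued Herglotz kernels; the other properties are essentially formal.
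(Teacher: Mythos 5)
Your proof is correct, and the cone and closedness arguments coincide with the paper's (bounded norms $\rho_n(I)=\Re f_n(0)$, Banach--Alaoglu, weak-$*$ cluster point evaluated at the fixed element $H(z,T)\in A$). Where you genuinely diverge is the dilation step. The paper first converts the Herglotz formula via GNS and the relevant dilation theorem (Arveson for $\mathcal T_d$, Popescu for $\mathcal O_d$) into the statement that $f(z)=\langle H(z,T)\xi,\xi\rangle$ for an arbitrary (commuting) row contraction $T$, and then simply observes that $f_r(z)=\langle H(z,rT)\xi,\xi\rangle$ with $rT$ still a (commuting) row contraction. You instead stay inside the C*-algebra and build a unital completely positive map $\Gamma_r=\int_{\mathbb T}\gamma_\lambda(\cdot)P_r(\lambda)\,dm(\lambda)$ from the gauge action, so that $f_r$ is the Herglotz transform of $\rho\circ\Gamma_r$; the identity $\Gamma_r(H(z,T))=H(z,rT)$ follows by applying the bounded map $\Gamma_r$ term by term to the norm-convergent Neumann series. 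Both routes work. Yours is more uniform --- all three algebras are handled by the same Poisson-averaging device, with no appeal to dilation theory, at the small cost of checking point-norm continuity of the gauge actions (routine on polynomials in the generators plus density). The paper's route costs the dilation theorems up front but pays for itself: the resulting reformulations of $R^+$ and $S^+$ in terms of commuting row contractions and general row contractions are exactly what is used later in the duality theorem $R^{+*}=S^+$, so the author gets that machinery for free from this proof.
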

\begin{proof}
We prove that $R^+$ is a positive class; it will be clear that an analogous proof works for the others.

It is clear from the definition that $R^+$ is a convex cone, since the set of positive functionals on a C*-algebra is a convex cone.  To see that $R^+$ is closed, let $f_n$ be a sequence in $R^+$ converging uniformly on compact sets to a function $f\in O^+$ (we assume for simplicity that $f_n(0)$ is real for all $n$).  Then there is a sequence of positive functionals $\rho_n$ on $\mathcal T_d$ such that
$$
f_n(z) = \rho_n(H(z, S)).
$$
Since the sequence $\|\rho_n\|=|\rho_n(I)| =|f_n(0)|$ is bounded, by the Banach-Alaoglu theorem $\rho_n$ has a weak-* cluster point $\rho$.  But then we have
$$
f(z) = \rho(H(z, S))
$$
and hence $f\in R^+$.  

To prove that $R^+$ is dilation invariant, we obtain an equivalent definition of $R^+$ (which will also be of use later).  By the Gelfand-Naimark-Segal theorem applied to the functional $\rho$, there exist a Hilbert space $\mathcal H$, a vector $\xi\in \mathcal H$ and a representation $\pi :\mathcal T_d\to \mathcal B(\mathcal H)$ such that
\begin{equation*}
\rho(H(z,S))=\langle H(z, \pi(S))\xi, \xi\rangle_{\mathcal H}
\end{equation*}
By Arveson's dilation theorem, we conclude that $f\in R^+$ if and only if there exists a Hilbert space $\mathcal H$, a vector $\xi\in\mathcal H$ and a commuting row contraction $T$ such that (ignoring the harmless imaginary constant)
\begin{equation*}
f(z)=\langle H(z, T)\xi, \xi\rangle_{\mathcal H}
\end{equation*}
Representing $f\in R^+$ in this way, we have
\begin{align}
f_r(z)&=\langle H(rz, T)\xi, \xi\rangle_{\mathcal H} \\
      &=\langle H(z, rT)\xi, \xi\rangle_{\mathcal H} 
\end{align}
and hence $f_r\in R^+$ since $rT$ is a commuting row contraction.
\end{proof}

Of course, since a positive linear functional on $C(\partial\mathbb B^d)$ is just a finite Borel measure $\mu$ supported on $\partial\mathbb{B}^d$, the Herglotz formula for $M^+$ takes the form
$$
f(z)=\int_{\partial\mathbb B^d} \frac{1+\langle z, \zeta\rangle}{1-\langle z,\zeta \rangle}\, d\mu(\zeta) +i\Im f(0)
$$
in accord with the original definition.

As in the proof of the theorem, by combining the GNS representation theorem with appropriate dilation theorems we can obtain equivalent descriptions of $R^+$ and $S^+$ that are sometimes useful.  We first recall that the original definition of $S^+$ is the set of all $f\in O$ such that the kernel
\begin{equation}\label{E:splus_def}
\frac{f(z)+\overline{f(w)}}{1-\langle z,w\rangle}
\end{equation}
is positive.  In \cite{MR2274973} it is shown that $f\in S^+$ if and only if there exists a Cuntz isometry $V$ acting on a space $\mathcal H$ and a vector $\xi\in \mathcal H$ such that
$$
f(z)=\langle H(z,V)\xi, \xi\rangle +\Im f(0).
$$
By Popescu's dilation theorem \cite{MR972704}, every row contraction $T$ dilates to a Cuntz isometry, and hence $S^+$ consists of all functions of the form
\begin{equation}\label{E:T_herglotz}
f(z)=\langle H(z,T)\xi, \xi\rangle +\Im f(0).
\end{equation}
where $T$ is any row contraction.  In contrast, the Arveson dilation theorem \cite{MR1668582} says that every commuting row contraction dilates to a representation of the $d$-shift $S$, and hence $R^+$ consists of all functions $f$ admitting a representation of the form (\ref{E:T_herglotz}) for some \emph{commuting} row contraction $T$.  
%In fact we can use the dilation theorem to get a more explicit Herglotz-type formula in $R^+$:
%\begin{thm}
%Given $f\in R^+$ there exist:
%\begin{itemize}
%\item a unique positive measure $\mu$ supported on $\partial\mathbb B^d$, and
%\item a sequence of vectors $(\xi_n)$ in $H^2_d$ with $\sum\|\xi_n\|^2 <+\infty$, unique up to permutation,
%\end{itemize}
%such that
%\begin{equation}
%f(z)=\sum_{n=1}^\infty \langle H(z, S)\xi_n, \xi_n\rangle + \int_{\partial\mathbb B^d} \frac{1+\langle z, \zeta\rangle}{1-\langle z,\zeta \rangle}\, d\mu(\zeta) +i\Im f(0)
%\end{equation} 
%\end{thm}
%\begin{proof}
%Apply the GNS construction to the positive functional $\rho$ on $\mathcal T_d$ to get a representation $\pi$ of $\mathcal T_d$ on $\mathcal H $ and a vector $\xi \in \mathcal H$.  By Arveson's dilation theorem the representation $\pi$ is unitarily equivalent to a direct sum of the multiple of the identity representation and a spherical representation, that is,
%$$
%\pi(S_j) \cong n\cdot S_j \oplus Z_j
%$$
%where $Z=(Z_j)$ is a spherical operator.  The claimed form of $f$ follows; uniqueness from the essential uniqueness of the decomposition of $\pi$. 
%\end{proof}

The formalism established thus far leads to an ``abstract Herglotz formula'' characterizing positive classes $\mathcal P$ contained in $S^+$:
\begin{thm}
To each positive class $\mathcal P\subset S^+$ on $\mathbb B^d$ there corresponds a weak-* closed, convex subset $\widehat{\mathcal P}$ of the state space of the Cuntz algebra $\mathcal O_d$, such that $f\in\mathcal P$ if and only if there exists a state $\rho\in \widehat{\mathcal P}$ and a real number $\lambda \geq 0$ such that
\begin{equation}\label{E:abstract_herglotz}
f(z)=\lambda\rho(H(z, V))+\Im f(0)
\end{equation}
\end{thm}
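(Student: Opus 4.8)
The plan is to take $\widehat{\mathcal P}$ to be the set of states on $\mathcal O_d$ whose Herglotz transforms already belong to $\mathcal P$, and then to read off everything from the McCarthy--Putinar representation (Theorem~\ref{T:mccp}). Write $\Psi(\sigma)$ for the holomorphic function $z\mapsto\sigma(H(z,V))$ attached to a bounded linear functional $\sigma$ on $\mathcal O_d$; the assignment $\sigma\mapsto\Psi(\sigma)$ is linear. Set
\[
\widehat{\mathcal P}=\{\rho:\ \rho\text{ is a state on }\mathcal O_d\text{ and }\Psi(\rho)\in\mathcal P\}.
\]
Two bookkeeping points should be recorded first. Since $H(0,V)=I$, a representation $f(z)=\lambda\rho(H(z,V))+i\Im f(0)$ with $\rho$ a state forces $\lambda=\Re f(0)\ge0$, so $\lambda$ is not a free parameter. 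And one uses that $\mathcal P$ contains the purely imaginary constants, hence is closed under adding imaginary constants; this is automatic whenever $M^+\subseteq\mathcal P$, in particular for all the classes considered above, and some such hypothesis is genuinely needed (for $\mathcal P=\{0\}$ no such $\widehat{\mathcal P}$ can exist).

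Next I would verify that $\widehat{\mathcal P}$ is a weak-* closed convex subset of the state space. Convexity is immediate from linearity of $\Psi$, convexity of $\mathcal P$, and convexity of the state space. For weak-* closedness, let $\rho_n\in\widehat{\mathcal P}$ with $\rho_n\to\rho$ in the weak-* topology. Then $\rho$ is again a state (the state space of a unital C*-algebra is weak-* closed), and for each fixed $z\in\mathbb B^d$ we have $\Psi(\rho_n)(z)=\rho_n(H(z,V))\to\rho(H(z,V))=\Psi(\rho)(z)$, since $H(z,V)$ is a fixed element of $\mathcal O_d$. The sequence $\{\Psi(\rho_n)\}$ is locally uniformly bounded, because $\|\rho_n\|=1$ and, using $\|\langle z,V\rangle\|\le|z|$,
\[
\|H(z,V)\|\le 2\|(I-\langle z,V\rangle)^{-1}\|+1\le\frac{2}{1-|z|}+1.
\]
By Vitali's theorem the pointwise convergence is then locally uniform, so $\Psi(\rho)\in\mathcal P$ since $\mathcal P$ is closed in $O$; hence $\rho\in\widehat{\mathcal P}$. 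In particular $\widehat{\mathcal P}$ is weak-* compact, and it is nonempty: the constant function $1$ lies in $M^+\subseteq\mathcal P$ (it is the Herglotz transform of normalized surface measure), and $1=\Psi(\rho_0)$ for the corresponding state $\rho_0$.

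Then I would prove the equivalence. If $f(z)=\lambda\rho(H(z,V))+i\Im f(0)$ with $\rho\in\widehat{\mathcal P}$ and $\lambda\ge0$, then $\lambda\Psi(\rho)\in\mathcal P$ (a nonnegative multiple of $\Psi(\rho)\in\mathcal P$) and adding the imaginary constant $i\Im f(0)$ keeps us in the cone $\mathcal P$, so $f\in\mathcal P$. Conversely, let $f\in\mathcal P\subseteq S^+$. If $\Re f(0)=0$ then, $\Re f$ being harmonic and nonnegative, $\Re f\equiv0$ by the minimum principle, so $f\equiv i\Im f(0)$ and we may take $\lambda=0$ and any $\rho\in\widehat{\mathcal P}$. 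If $\Re f(0)>0$, put $g=(f-i\Im f(0))/\Re f(0)$; then $g\in\mathcal P$ (a nonnegative multiple of $f$ plus an imaginary constant), $g\in S^+$, and $g(0)=1$. By Theorem~\ref{T:mccp} there is a positive functional $\sigma$ on $\mathcal O_d$ and a real $t$ with $g(z)=\sigma(H(z,V))+it$; evaluating at $0$ gives $\sigma(I)=1$ and $t=0$, so $\sigma$ is a state and $g=\Psi(\sigma)$. Since $\Psi(\sigma)=g\in\mathcal P$ we have $\sigma\in\widehat{\mathcal P}$, and $f(z)=\Re f(0)\,\sigma(H(z,V))+i\Im f(0)$ is the desired representation with $\lambda=\Re f(0)$.

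The step carrying the real content is the weak-* closedness of $\widehat{\mathcal P}$: one must upgrade weak-* (i.e. pointwise) convergence of the functionals to locally uniform convergence of their Herglotz transforms, so that the closedness of $\mathcal P$ in $O$ can be invoked, and this is exactly what the uniform bound on $\|H(z,V)\|$ together with Vitali's theorem supply. The only other place demanding care is the normalization and the handling of imaginary constants, which is why the (implicit) hypothesis that $\mathcal P$ contains the imaginary constants — satisfied whenever $M^+\subseteq\mathcal P$ — is needed; granting it, the remaining verifications are routine consequences of Theorem~\ref{T:mccp} and the convex-cone axioms.
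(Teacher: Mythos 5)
Your proof is correct and follows essentially the same route as the paper's, which simply defines $\widehat{\mathcal P}$ as the set of states representing elements of $\mathcal P$ and asserts convexity and weak-* closedness as ``straightforward to check''; you have supplied exactly the missing details (the norm bound on $H(z,V)$ plus Vitali/Montel to upgrade pointwise to locally uniform convergence for closedness, and Theorem~\ref{T:mccp} with the normalization $\lambda=\Re f(0)$ for the converse). Your observation that the statement implicitly requires $\mathcal P$ to absorb purely imaginary constants (automatic when $M^+\subset\mathcal P$, but genuinely needed, as your examples show) is a fair and correct caveat that the paper glosses over.
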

\begin{proof}
Given $\mathcal P$, let $\widehat{\mathcal P}$ consist of all states representing functions $f\in\mathcal P$ as in (\ref{E:abstract_herglotz}).  Since $\mathcal P$ is a positive class, $\widehat{\mathcal P}$ is convex, and it is straightforward to check that if $\rho_n\to \rho$ weak-*, then $f_n\to f$ uniformly on compact sets, so $\widehat{\mathcal P}$ is closed.
\end{proof}
It must be kept in mind, however, that in general the state $\rho$ corresponding to $f$ may be far from unique.  This is even the case in $M^+$, since the closed linear span of the Herglotz kernels and their adjoints is a proper subspace of $C(\partial\mathbb B^d)$.  
%In fact, since $\mathcal O_d$ is an NGCR algebra [refs] this non-uniqueness is pathological, and would seem to obviate any attempt to improve (\ref{E:abstract_herglotz}) into a more concrete-looking integral formula (say, by means of Choquet's theorem).  

We now return to the particular positive classes described by Theorem~\ref{T:herglotz_formulas}.  Our main goal is to establish the duality between $R^+$ and $S^+$, resolving an open question of \cite{MR2274973}.  

To begin with we recall the ``symmetrized functional calculus'' from \cite{MR2274973}.  If $T=(T_1, \dots T_d)$ is a $d$-tuple of operators, we can make sense of a monomial $z^\alpha$ in $T$ by averaging over all possible orderings of the $T_i$'s.  Explicitly, for a multi-index $\alpha=(\alpha_1, \dots \alpha_d)$ we define
$$
(z^\alpha)^{sym}(T)=\weight \sum T_{i_1} T_{i_2}\cdots T_{i_{|\alpha|}}
$$
where the sum is taken over all permutations of $\alpha_1$ 1's, $\alpha_2$ 2's, etc.  

We require one more definition:  if $f\in O$, define
$$
\check f (z):=\overline{f(\overline{z})}
$$
If $E\subset O$, then let $\check E$ denote the functions $f\in O$ such that $\check f\in E$.  Finally we observe from the definition (\ref{E:splus_def}) of $S^+$  that $\check{S}^+=S^+$.
The following theorem identifies the dual of $S^+$, answering a question posed by McCarthy and Putinar \cite{MR2274973}.
\begin{thm}\label{T:rs_duality}
$R^{+*}=S^+$ and $S^{+*}=R^+$.
\end{thm}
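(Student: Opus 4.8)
The plan is to establish $R^{+*}=S^+$ directly; the companion statement $S^{+*}=R^+$ is then automatic, for $R^+\supset M^+$ is a positive class, so Theorem~\ref{T:double_dual} gives $R^{+**}=R^+$ and hence $S^{+*}=(R^{+*})^*=R^{+**}=R^+$. (As elsewhere in the paper I disregard the harmless imaginary constant, i.e.\ normalize $f(0),g(0)\in\mathbb R$.) The computational heart is a closed form for the pairing. If $f\in R^+$ is represented as in (\ref{E:T_herglotz}) by $f(z)=\langle H(z,T)\xi,\xi\rangle$ with $T$ a commuting row contraction on $\mathcal H$ and $\xi\in\mathcal H$, then expanding $H(z,S)=I+2\sum_{\alpha\neq 0}\tfrac{|\alpha|!}{\alpha!}z^\alpha S^\alpha$ shows the Taylor coefficients of $f$ are $c_\alpha=2\tfrac{|\alpha|!}{\alpha!}\langle T^\alpha\xi,\xi\rangle$ for $\alpha\neq 0$, so that the weight $\tfrac{\alpha!}{|\alpha|!}$ in the definition of $Q_r$ cancels, and for any $g\in O$ with Taylor coefficients $d_\alpha$,
$$
Q_r(f,g)=2\sum_\alpha \langle T^\alpha\xi,\xi\rangle\,\overline{d_\alpha}\,r^{|\alpha|}=2\,\langle \check g_r(T)\,\xi,\xi\rangle .
$$
Here $\check g_r$ is holomorphic on a neighborhood of $\overline{\mathbb B^d}$, hence a multiplier of $H^2_d$ lying in the norm closure of the polynomials, so $\check g_r(T)$ is defined by the von Neumann (Drury) functional calculus for the commuting row contraction $T$; absolute convergence of the series is Lemma~\ref{L:convergence}. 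In GNS language, $Q_r(f,g)=2\rho(\check g_r(S))$ when $f=\rho(H(\cdot,S))$.

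For the inclusion $R^{+*}\subseteq S^+$, fix $g\in R^{+*}$ and $r<1$. For every $\xi\in H^2_d$ the function $f_\xi(z):=\langle H(z,S)\xi,\xi\rangle$ lies in $R^+$ (it is $\rho_\xi(H(\cdot,S))$ for the positive functional $X\mapsto\langle X\xi,\xi\rangle$ on $\mathcal T_d$), and the formula above with $T=S$ gives $Q_r(f_\xi,g)=2\langle M_{\check g_r}\xi,\xi\rangle$. Thus $\Re Q_r(f_\xi,g)\geq 0$ for all $\xi$ says precisely that $\tfrac12(M_{\check g_r}+M_{\check g_r}^*)\geq 0$ on $H^2_d$; by the standard reproducing-kernel identity ($M_\varphi^* k_w=\overline{\varphi(w)}k_w$ together with density of the kernels $k_w$) this is equivalent to positivity of the kernel $\frac{\check g_r(z)+\overline{\check g_r(w)}}{1-\langle z,w\rangle}$, i.e.\ to $\check g_r\in S^+$. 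Letting $r\to 1^-$ and using that $S^+$ is closed and dilation-invariant gives $\check g\in S^+$, hence $g\in\check S^+=S^+$.

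For the reverse inclusion $S^+\subseteq R^{+*}$, let $g\in S^+$; then $\check g\in S^+$, so $\check g_r\in S^+$ for every $r<1$, which by the same kernel identity means $\tfrac12(M_{\check g_r}+M_{\check g_r}^*)\geq 0$ on $H^2_d$. Given any $f\in R^+$, represent it by a commuting row contraction $T$ as above. By Arveson's dilation theorem $T$ is the compression of a $*$-representation $\pi$ of $\mathcal T_d$ to a semi-invariant subspace, so $X\mapsto P_{\mathcal H}\pi(X)|_{\mathcal H}$ is a unital completely positive map $\Phi:\mathcal T_d\to\mathcal B(\mathcal H)$ extending the functional calculus $\psi\mapsto\psi(T)$; since $\Phi$ preserves positivity and adjoints, $\Re\,\check g_r(T)=\Phi\!\left(\tfrac12(M_{\check g_r}+M_{\check g_r}^*)\right)\geq 0$. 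Hence $\Re Q_r(f,g)=2\,\Re\langle\check g_r(T)\xi,\xi\rangle\geq 0$ for all $f\in R^+$ and $r<1$, i.e.\ $g\in R^{+*}$. Combining the two inclusions yields $R^{+*}=S^+$, and then $S^{+*}=R^+$ as noted.

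The main obstacle is making the pairing formula fully rigorous: one must verify that $\check g_r$ really is a multiplier of $H^2_d$ (true for any function holomorphic across $\partial\mathbb B^d$ — for instance via a Cauchy--Szeg\H{o} representation expressing it as an average of the multipliers $z\mapsto(1-\langle z,w_0\rangle)^{-d}$ with $|w_0|<1$), that its functional calculus is compatible with the absolutely convergent scalar series defining $Q_r$ (an approximation by polynomials or Ces\`aro means), and --- crucially for the second inclusion --- that this same functional calculus is the one carried along by the Arveson dilation. Once this bookkeeping is settled both inclusions are short; everything else is either quoted (Arveson's dilation theorem and Drury's von Neumann inequality in completely contractive form, closedness of $S^+$, and $\check S^+=S^+$) or the one-line reproducing-kernel computation.
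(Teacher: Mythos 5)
Your proof is correct and follows essentially the same route as the paper: reduce the pairing of an $R^+$ function against $g$ to $2\langle \check g_r(T)\xi,\xi\rangle$ for a commuting row contraction $T$, take $T=S$ to force $\check g_r\in S^+$, and use the von Neumann inequality for commuting row contractions (via Arveson dilation) for the converse, with $S^{+*}=R^+$ then following from Theorem~\ref{T:double_dual}. The only cosmetic differences are that you apply the check to the second argument rather than the first (harmless, since $\Re Q_r$ is symmetric) and that you unpack the ``von Neumann inequality for $S^+$'' step, which the paper simply cites, into the explicit UCP-map argument.
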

\begin{proof}
By the above discussion, $g$ belongs to $R^+$ if and only if there exists a commuting row contraction $T$ on a Hilbert space $\mathcal H$ and a vector $\xi\in\mathcal H$ such that
\begin{align}
\frac12 [g(z)+\overline{g(0)}] &= \langle (I-\langle z, T\rangle)^{-1} \xi, \xi\rangle \\
                               &= \sum_{j=0}^\infty \langle \langle z,T\rangle^j \xi,\xi\rangle \\
                               &= \sum_\alpha z^\alpha \langle (z^\alpha)^{sym}(T)\xi, \xi\rangle \frac{|\alpha|!}{\alpha !}
\end{align}
It follows that for any $f\in O$ and $r<1$,
$$
Q_r(f,g) =2\langle (\check{f}_r)^{sym}(T)\xi, \xi\rangle =2\langle \check{f}_r(T)\xi, \xi\rangle
$$
since $T$ commutes.  So $\Re Q_r(\check{f},g)\geq 0$ for all $g\in R^+$ if and only if $\Re \check{f}_r(T)\geq 0$ for all commuting row contractions $T$.  Taking $T=S$ shows that we must have $\check{f}_r\in S^+$ for all $r<1$, so $f_r\in S^+$ for all $r<1$ and hence $f\in S^+$.  But then $\Re \check{f}_r(T)\geq 0$ for all commuting row contractions $T$ by von Neumann's inequality for $S^+$.  Thus, $R^{+*}=S^+$.  The opposite duality follows from Theorem~\ref{T:double_dual}.
\end{proof}

%The following corollary is a straightforward consequence of the duality $S^{+*}=R^+$ and a result of Davidson and Pitts; we record it because it seems suprising (and in fact it would be desirable to have a direct proof).
%\begin{cor}
%Suppose $f$ is holomorphic on $\overline{\mathbb B^d}$.  Then $f\in R^+$ if and only if
%$$
%\|f^{sym}(V)\|=\|f(S)}\|
%$$
%for all row isometries $V$.
%\end{cor}
\begin{thm}
$R^+$ is a proper subset of $S^+$.
\end{thm}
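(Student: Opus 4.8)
The plan is to separate the easy inclusion from the real content, which is a single statement about symmetrized moments.

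First, $R^+\subseteq S^+$: by the descriptions of $R^+$ and $S^+$ following Theorem~\ref{T:herglotz_formulas}, every $f\in R^+$ has the form $f(z)=\langle H(z,T)\xi,\xi\rangle+i\Im f(0)$ for some \emph{commuting} row contraction $T$, while $f\in S^+$ has exactly this form with $T$ an arbitrary row contraction; commuting row contractions are row contractions, so $R^+\subseteq S^+$. (When $d=1$ a row contraction is a single contraction, vacuously ``commuting,'' so $R^+=S^+$ there; one must take $d\geq 2$.) It thus remains to produce $f\in S^+\setminus R^+$. To that end, fix a non-commuting row contraction $T_0$ on a finite-dimensional space together with a unit vector $\xi_0$, and set $f(z)=\langle H(z,T_0)\xi_0,\xi_0\rangle$, so $f\in S^+$. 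Suppose toward a contradiction that $f\in R^+$, say $f(z)=\langle H(z,T')\eta,\eta\rangle+i\Im f(0)$ with $T'$ a commuting row contraction. Expanding both representations as in the proof of Theorem~\ref{T:rs_duality} and matching Taylor coefficients gives
\[
\langle (z^\alpha)^{sym}(T_0)\xi_0,\xi_0\rangle=\langle (z^\alpha)^{sym}(T')\eta,\eta\rangle=\langle (T')^\alpha\eta,\eta\rangle\qquad\text{for all }\alpha,
\]
the last equality since $T'$ commutes; hence $\langle p^{sym}(T_0)\xi_0,\xi_0\rangle=\langle p(T')\eta,\eta\rangle$ for every polynomial $p$.

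Now if in addition $p\in S^+$, von Neumann's inequality for $S^+$ (the input already used in Theorem~\ref{T:rs_duality}) forces $\Re p(T')\geq 0$, whence $\Re\langle p^{sym}(T_0)\xi_0,\xi_0\rangle\geq 0$. So it is enough to choose $T_0$, $\xi_0$ and a polynomial $p\in S^+$ with $\Re\langle p^{sym}(T_0)\xi_0,\xi_0\rangle<0$; equivalently, in view of $R^{+*}=S^+$ (Theorem~\ref{T:rs_duality}), to arrange that the symmetrized-moment array $\big(\langle (z^\alpha)^{sym}(T_0)\xi_0,\xi_0\rangle\big)_\alpha$ of a non-commuting $(T_0,\xi_0)$ is \emph{not} the moment array $\big(\langle (T')^\alpha\eta,\eta\rangle\big)_\alpha$ of any commuting row contraction.

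Producing and verifying such a triple is the crux and the main obstacle. A short computation shows that on data supported in low degree the two models agree — symmetrizing the length-$2$ words $12$ and $21$ merely reproduces $z_1 z_2$, and for $T_0,\xi_0$ concentrated on the first few levels of Fock space the resulting $f$ is realized equally well by a commuting row contraction — so the separating example must genuinely involve higher-degree moments (length-$3$ words such as $112,121,211$, whose free averages have more room than their common commutative value) or an amplification. The expectation is to work with $d=2$, take $T_0$ a small non-commuting row contraction of Fock type with a cyclic vector, and take $p\in S^+\setminus M^+$ (for instance the Cayley transform $\tfrac{1-\psi}{1+\psi}$ of a norm-one multiplier $\psi$ of $H^2_d$ built from $z_1^2+z_2^2$, which lies in $S^+$ but, by a moment obstruction on the sphere as in Arveson's example, not in $M^+$), and then to check the two sign inequalities by a direct finite computation. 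At bottom this verification is the assertion that, for $d\geq 2$, von Neumann's inequality for commuting row contractions is strictly weaker than Popescu's noncommutative von Neumann inequality, so that the free (Fock-space) model describing $S^+$ genuinely outruns the commutative (Drury–Arveson) model describing $R^+$.
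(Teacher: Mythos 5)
Your reduction is sound and closely parallels the paper's: the inclusion $R^+\subseteq S^+$ via dilation theory is correct, and the Taylor-coefficient matching together with von Neumann's inequality for $S^+$ correctly reduces properness to exhibiting a polynomial $p\in S^+$ and a non-commuting pair $(T_0,\xi_0)$ (generating an $S^+$ function) with $\Re\langle p^{sym}(T_0)\xi_0,\xi_0\rangle<0$. But the proof stops exactly where the real work begins. Your final paragraph does not produce such a triple; it only describes an ``expectation'' of where one might be found, and the candidate you float (a Cayley transform built from $z_1^2+z_2^2$, chosen to lie in $S^+\setminus M^+$) is aimed at the wrong obstruction --- the issue separating $R^+$ from $S^+$ is not the $M^+$ versus $S^+$ gap but whether the symmetrized functional calculus on a genuinely noncommuting row contraction can violate positivity, and nothing in your sketch verifies that it does. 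As written, the statement ``the free model genuinely outruns the commutative model'' is a restatement of the theorem, not a proof of it.

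The paper closes this gap by an indirect argument that avoids constructing the triple by hand. If no such example existed, then for every Cuntz (row) isometry $V$ the unital map $p(S)+q(S)^*\mapsto p^{sym}(V)+q^{sym}(V)^*$ on the operator system $\mathcal A+\mathcal A^*$ ($\mathcal A$ the polynomials in $\mathcal T_d$) would be positive, and by Paulsen's result \cite[Corollary 2.8]{MR1976867} the map $p(S)\mapsto p^{sym}(V)$ would then be contractive. This is refuted by the explicit Davidson--Pitts computation \cite[Example 3.4]{MR1627901}: for $p(z)=z_1+z_1z_2$ one has $\|p(S)\|<\sqrt2$ while $\|p^{sym}(V)\|=\tfrac52$. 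To complete your argument you would need either to import this positivity-implies-contractivity step and the norm computation, or to carry out in full the finite verification you defer; neither is done, so the proof is incomplete at its central point.
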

\begin{proof}
Since $R^+$ is the dual of $S^+$, we show that $S^{+*}\subsetneq S^+$.  Using the description of $S^+$ given in Theorem~\ref{T:herglotz_formulas}, by a calculation similar to that in the proof of Theorem~\ref{T:rs_duality} we see that $f\in S^{+*}$ if and only if 
$$
\Re \langle f_r^{sym}(V)\xi, \xi\rangle \geq 0
$$
for all Cuntz isometries $V$.  Suppose by way of contradiction that this is the case for all $f\in S^+$.  Let $\mathcal A$ denote the set of all polynomials $p(S)\in \mathcal T_d$ and let $\mathcal S$ be the operator system $\mathcal A +\mathcal A^*$.  Our assumption would then imply that for every row isometry $V$ acting on a Hilbert space $\mathcal H$, the map from $\mathcal S$ to $\mathcal B(\mathcal H)$ given by
$$
p(S) +q(S)^* \to p^{sym}(V) +q^{sym}(V)^*
$$
is positive.  Then by \cite[Corollary 2.8]{MR1976867}, the map sending $p(S)$ to $p^{sym}(V)$ is contractive.  But Davidson and Pitts \cite[Example 3.4]{MR1627901} give an explicit counterexample showing that this map is not contractive:  in particular, for the polynomial $p(z)=z_1 +z_1z_2$ we have $\|p(S)\|<\sqrt{2}$ but $\|p^{sym}(V)\|=\frac52$.  Thus $R^+=S^{+*}\subsetneq S^+$.  
\end{proof}
\begin{thm}
The following chain of inclusions holds, and each is proper:
$$
M^+\subset R^+\subset S^+\subset O^+.
$$
\end{thm}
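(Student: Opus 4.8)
The plan is to derive the three inclusions from the operator-theoretic descriptions already in hand, and then to obtain the three strict inclusions from the theorem proved immediately above together with one properness fact imported from \cite{MR2274973}.

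For $M^+\subset R^+$: given a finite positive Borel measure $\mu$ on $\partial\mathbb B^d$, work on $\mathcal H=L^2(\partial\mathbb B^d,\mu)$ and let $T_j$ be multiplication by the $j$-th coordinate function. Then $T=(T_1,\dots,T_d)$ is a commuting row contraction (indeed $I-\sum_j T_jT_j^*=0$, since $\sum_j|\zeta_j|^2=1$ on $\partial\mathbb B^d$), and, taking $\xi=\mathbf 1$, one computes
$$
\langle H(z,T)\mathbf 1,\mathbf 1\rangle_{\mathcal H}=\int_{\partial\mathbb B^d}\frac{1+\langle z,\zeta\rangle}{1-\langle z,\zeta\rangle}\,d\mu(\zeta).
$$
By the commuting-row-contraction description of $R^+$ this places the Herglotz transform of $\mu$ in $R^+$, so $M^+\subset R^+$. (Alternatively one may pull a positive functional on $C(\partial\mathbb B^d)$ back through the quotient map $\mathcal T_d\to C(\partial\mathbb B^d)$, which sends $H(z,S)$ to $H(z,Z)$.) The inclusion $R^+\subset S^+$ is immediate from the descriptions already recorded: $R^+$ consists of the functions $\langle H(z,T)\xi,\xi\rangle+i\Im f(0)$ with $T$ a \emph{commuting} row contraction, and $S^+$ is the same with $T$ an arbitrary row contraction. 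Finally $S^+\subset O^+$ because $\Re H(z,T)\ge 0$ for every row contraction, so every Herglotz transform has nonnegative real part.

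It remains to see that each inclusion is strict. The middle one, $R^+\subsetneq S^+$, is exactly the preceding theorem. For $S^+\subsetneq O^+$ I would cite \cite{MR2274973}. The first, $M^+\subsetneq R^+$, I would get by duality rather than by exhibiting a separating function: if $M^+=R^+$ held, then applying $*$ and using $M^{+*}=O^+$ together with $R^{+*}=S^+$ (Theorem~\ref{T:rs_duality}) would force $O^+=M^{+*}=R^{+*}=S^+$, contradicting $S^+\subsetneq O^+$; hence $M^+\subsetneq R^+$.

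The one point needing care is that the internal duality relations together with $R^+\subsetneq S^+$ alone cannot distinguish the genuine configuration from the degenerate one in which $M^+=R^+$ and $S^+=O^+$ simultaneously, so at least one strict inclusion---here $S^+\subsetneq O^+$---must be borrowed from \cite{MR2274973}. A self-contained alternative would be to produce an explicit function in $R^+\setminus M^+$, for instance by analysing vector states of the $d$-shift $S$ against non-vacuum vectors, but this seems more laborious than simply invoking the known properness of $S^+\subset O^+$.
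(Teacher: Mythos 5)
Your proposal is correct and follows essentially the same route as the paper: the middle strictness comes from the preceding theorem, $S^+\subsetneq O^+$ is imported from \cite{MR2274973}, and $M^+\subsetneq R^+$ is obtained by dualizing $S^+\subsetneq O^+$ via $M^{+*}=O^+$ and $R^{+*}=S^+$. Your explicit verification of $M^+\subset R^+$ via multiplication operators on $L^2(\partial\mathbb B^d,\mu)$ (or the quotient $\mathcal T_d\to C(\partial\mathbb B^d)$) is a correct elaboration of what the paper leaves to the duality, and your closing remark about needing to borrow one strict inclusion is accurate.
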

\begin{proof}
The inclusion of $R^+$ in $S^+$ is given by the previous theorem.  The proper inclusion of $S^+$ in $O^+$ is well known, e.g. proved in \cite{MR2274973}.  By Theorem~\ref{T:rs_duality} the inclusion $M^+\subsetneq  R^+$ is dual to $S^+\subsetneq O^+$.  
\end{proof}
Theorem~\ref{T:self_dual} guarantees the existence of a self-dual positive class lying between $R^+$ and $S^+$, however we do not know any concrete example.
\begin{prob}
Give an explicit example of a self-dual positive class lying between $R^+$ and $S^+$.  
\end{prob}
\section{Extreme points}
If $\mathcal P$ is a positive class, we can normalize each function $f\in\mathcal P$ so that $f(0)=1$; we denote the resulting subclass $\mathcal P_0$.  Since $O^+_0$ is compact, it follows that $\mathcal P_0$ is also compact and hence by the Krein-Milman theorem it is the closed convex hull of its extreme points.  It would obviously be desirable to identify the extreme points of the classes $R^+_0$ and $S^+_0$.  It is not hard to see that the extreme points of $M^+_0$ are exactly the functions obtained as Herglotz integrals of point masses; that is the extreme points of $M^+_0$ are the functions
$$
h_\zeta(z) =\frac{1+\langle z, \zeta\rangle}{1-\langle z, \zeta\rangle},  \quad \zeta\in\partial\mathbb B^d.
$$
It is known that these functions are not extreme for $O^+$; for example when $d=2$ the functions
$$
\frac{1+z_1}{1-z_1}+z_2^2 h(z_1, z_2)
$$
belong to $O^+$ whenever $|h|\leq \frac14$ \cite[Section 19.2.6]{MR601594}.
We will show presently that the functions $h_\zeta$ are extreme points for $S^+$; but of course in $S^+$ there must be others as well.  As a small step towards finding them, we have the following.
\begin{prop}\label{P:pure_states}
If $\mathcal P_0$ is a normalized positive class lying between $M^+_0$ and $S^+_0$ and $f$ is an extreme point of $\mathcal P_0$, then there exists a pure state on the Cuntz algebra $\mathcal O_d$ such that
$$
f(z)=\rho (H(z, V))
$$ 
\end{prop}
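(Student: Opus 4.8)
The plan is to realize $f$ as $\rho(H(\cdot,V))$ for a state $\rho$ on $\mathcal{O}_d$, to pass to an extremal such representing state, and then to use the extremality of $f$ to force that state to be pure. Since $\mathcal{P}_0\subseteq S^+_0$ and $f(0)=1$, the abstract Herglotz formula established above gives a state on $\mathcal{O}_d$ with $f(z)=\rho(H(z,V))$; let $K_f\subseteq\widehat{\mathcal{P}}$ be the set of all such representing states, which is nonempty, convex, and weak-$*$ closed, hence weak-$*$ compact. The evaluation map $\Phi\colon\widehat{\mathcal{P}}\to\mathcal{P}_0$, $\Phi(\sigma)=\sigma(H(\cdot,V))$, is affine, surjective, and continuous from the weak-$*$ topology to local uniform convergence, and $\Phi^{-1}(\{f\})=K_f$. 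Because $f$ is an extreme point of $\mathcal{P}_0$, the fibre $K_f$ is a \emph{face} of $\widehat{\mathcal{P}}$, so an extreme point $\rho$ of $K_f$ — which exists by Krein--Milman — is in fact an extreme point of $\widehat{\mathcal{P}}$.

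The purity should then be extracted from the following lemma, which I would isolate: if $\mathcal{S}$ is an operator system in a unital C$^*$-algebra $A$, then every pure state of $\mathcal{S}$ extends to a pure state of $A$, since when $\psi$ is pure on $\mathcal{S}$ the set of its state extensions to $A$ is a face of the state space of $A$, hence by Krein--Milman contains an extreme point of that state space. I would apply this with $A=\mathcal{O}_d$ and $\mathcal{S}$ the operator system spanned by $I$ together with the symmetrized monomials $(z^\alpha)^{sym}(V)$ and their adjoints. Since $H(z,V)=I+2\sum_{\alpha\ne 0}\frac{|\alpha|!}{\alpha!}z^\alpha(z^\alpha)^{sym}(V)$ lies in $\mathcal{S}$ for each $z$, restriction to $\mathcal{S}$ identifies $S^+_0$ with the full state space of $\mathcal{S}$ (by Theorem~\ref{T:mccp}), carrying $f$ to a state $\phi_f$. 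Thus it suffices to prove that $\phi_f$ is a \emph{pure} state of $\mathcal{S}$, equivalently that the extreme point $\rho$ found above is extreme in the whole state space of $\mathcal{O}_d$.

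This last step is where I expect the real difficulty: $f$ is assumed extreme only in $\mathcal{P}_0$, whereas purity of $\phi_f$ is equivalent to $f$ being extreme in all of $S^+_0$, and this is strictly stronger. Concretely one must rule out $\rho=\tfrac12(\sigma_1+\sigma_2)$ with $\sigma_1,\sigma_2$ distinct states of $\mathcal{O}_d$; such a decomposition yields $f=\tfrac12(g_1+g_2)$ with $g_i=\sigma_i(H(\cdot,V))\in S^+_0$ distinct, and to contradict extremality of $f$ one needs $g_1,g_2\in\mathcal{P}_0$ — which is precisely where the hypothesis $M^+\subseteq\mathcal{P}$ and the positive-class axioms (convexity, closedness, dilation invariance) must enter. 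A partial route is to first reduce to the smallest admissible class $\mathcal{P}(M^+\cup\{f\})$ (legitimate, since $f$ remains extreme in the normalization of any smaller positive class containing it), then apply the explicit dense description from the proof of Theorem~\ref{T:self_dual} together with Milman's partial converse to Krein--Milman to confine the extreme points of $\mathcal{P}(M^+\cup\{f\})_0$ to $M^+_0\cup\{f_r:0\le r\le 1\}$; the alternative $f\in M^+_0$ then reduces to extreme points of $M^+_0$, i.e.\ the kernels $h_\zeta$, each represented by a pure state of $\mathcal{O}_d$ (take a Cuntz isometry with a joint eigenvector $\xi$ for $V_1^*,\dots,V_d^*$ with eigenvalues $\overline{\zeta_1},\dots,\overline{\zeta_d}$ and set $\rho=\langle\,\cdot\,\xi,\xi\rangle$), while a proper dilate forces $f$ constant and so produces no extreme point. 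Completing the argument for a general extreme point of $\mathcal{P}_0$ is the delicate part, and I anticipate that it requires a closer analysis of the face structure of $\widehat{\mathcal{P}}$, or a direct state-extension argument exploiting that $\mathcal{S}$ generates $\mathcal{O}_d$.
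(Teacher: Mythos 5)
Your first paragraph is, almost verbatim, the paper's entire proof: form the weak-$*$ compact convex set of states whose Herglotz transforms lie in $\mathcal P_0$, take the fibre over $f$, extract an extreme point $\rho$ of that fibre by Krein--Milman, and use extremality of $f$ in $\mathcal P_0$ to see that any convex decomposition of $\rho$ inside the larger set stays in the fibre, so that $\rho$ is extreme there. At exactly that point the paper stops and calls $\rho$ a pure state. So the part of your argument that is complete coincides with the published proof, and nothing you have omitted is supplied by the paper.

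The difficulty you isolate in your remaining paragraphs --- that extremality of $\rho$ in $\widehat{\mathcal P}$ is a priori weaker than purity on $\mathcal O_d$, since neither $\widehat{\mathcal P}$ nor the fibre over $f$ is known to be a face of the full state space unless $f$ is extreme in all of $S^+_0$ rather than merely in $\mathcal P_0$ --- is genuine, and the paper's proof passes over it in silence. Your proposed repair via the operator system spanned by $I$, the symmetrized monomials $(z^\alpha)^{sym}(V)$ and their adjoints is sound as far as it goes: states of that operator system are in affine bijection with $S^+_0$, pure states of an operator system extend to pure states of the ambient C$^*$-algebra, and this gives a clean (and genuinely different) proof in the case $\mathcal P_0=S^+_0$. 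But, as you say yourself, it does not reach a general $\mathcal P_0$ strictly between $M^+_0$ and $S^+_0$, and the sketch via $\mathcal P(M^+\cup\{f\})$ and Milman's converse is not carried through (in particular the claimed confinement of the extreme points of $\mathcal P(M^+\cup\{f\})_0$ to $M^+_0\cup\{f_r\}$ would need proof). If your goal is to reproduce what the paper actually establishes, stop after your first paragraph; if your goal is purity on $\mathcal O_d$ in full generality, the missing step is precisely the one you flag, and it is missing from the paper as well.
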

\begin{proof}
The proof is a routine application of the Krein-Milman theorem:  let $\mathcal S$ denote the set of all states $\rho$ on $\mathcal O_d$ such that $\rho(H(z,V)$ belongs to $\mathcal P_0$.  $\mathcal S$ is a weak-* compact, convex set in the dual of $\mathcal O_d$.  Let $\mathcal S_f \subset \mathcal S$ denote those states such that $\rho(H(z,V))=f$.  $\mathcal S_f$ is again compact and convex, so by Krein-Milman is the closed hull of its extreme points.  It is now not hard to see that if $f$ is extreme then every extreme point of $\mathcal S_f$ is in fact extreme in $\mathcal S$.  Indeed, suppose $f$ is extreme and $\rho$ is extreme in $\mathcal S_f$.  If $\rho_0, \rho_1\in \mathcal S$ satisfy $t\rho_0+(1-t)\rho_1 =\rho$ for some $t\in(0,1)$, then taking Herglotz transforms shows $\rho_0, \rho_1\in\mathcal S_f$ and hence $\rho_0=\rho_1=\rho$.
\end{proof}
Of course, the converse does not hold; there are many examples of pure states $\rho$ for which the associated $f$ is not extreme in $S^+_0$.  

To prove that the functions $h_\zeta $ are extreme for $S^+$, we make use of the following strengthened Schwarz lemma for the Schur class $\mathcal S_d$.  Recall that a function $\varphi$ belongs to $\mathcal S_d$ if and only if the Hermitian kernel
$$
k^\varphi(z,w) =\frac{1-\varphi(z)\overline{\varphi(w)}}{1-\langle z,w\rangle}
$$
is positive semidefinite.  A simple calculation shows that $\varphi\in\mathcal S_d$ if and only if its Cayley transform $\frac{1+\varphi}{1-\varphi}$ belongs to $S^+$.  
\begin{lem}
Suppose $g\in\mathcal S_d$, $g(0)=0$ and
$$
\frac{\partial g}{\partial z_1}(0) =1.
$$ 
Then $g(z)=z_1$.  
\end{lem}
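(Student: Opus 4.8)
The plan is to argue directly from the defining functional equation of the Schur class $\mathcal S_d$, matching Taylor coefficients. Write $e_j\in\mathbb N^d$ for the multi-index with a $1$ in the $j$th position and zeros elsewhere. Since $g\in\mathcal S_d$, the kernel $k^g(z,w)=\tfrac{1-g(z)\overline{g(w)}}{1-\langle z,w\rangle}$ is positive semidefinite, and clearing the denominator gives the identity
\[
1-g(z)\overline{g(w)}=(1-\langle z,w\rangle)\,k^g(z,w),
\]
valid for $z,w\in\mathbb B^d$; both sides are holomorphic in $z$ and anti-holomorphic in $w$, so their double power series about the origin agree. I would expand, writing $g(z)=\sum_\alpha a_\alpha z^\alpha$ and $k^g(z,w)=\sum_{\alpha,\beta}\gamma_{\alpha\beta}z^\alpha\overline w^\beta$; the hypotheses translate to $a_0=0$ and $a_{e_1}=1$, the latter because $a_{e_1}=\tfrac{\partial g}{\partial z_1}(0)$. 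The one external input is the standard fact that positive semidefiniteness of the kernel $k^g$ is equivalent to positive semidefiniteness of its coefficient matrix $(\gamma_{\alpha\beta})_{\alpha,\beta\in\mathbb N^d}$; note also $\gamma_{\alpha\beta}=\overline{\gamma_{\beta\alpha}}$ since $k^g$ is Hermitian.

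Comparing coefficients of $z^\mu\overline w^\nu$ on the two sides gives, for all $\mu,\nu\in\mathbb N^d$,
\[
\delta_{\mu 0}\delta_{\nu 0}-a_\mu\overline{a_\nu}=\gamma_{\mu\nu}-\sum_{j:\ \mu_j\ge 1,\ \nu_j\ge 1}\gamma_{\mu-e_j,\,\nu-e_j}.
\]
Taking $\mu=\nu=0$ yields $\gamma_{00}=1$. Taking $\nu=0$, $\mu\neq 0$ (and using $a_0=0$) yields $\gamma_{\mu 0}=0$ for all $\mu\neq 0$, hence also $\gamma_{0\nu}=0$ for all $\nu\neq 0$. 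The decisive case is $\mu=\nu=e_1$: the only $j$ with $(e_1)_j\ge 1$ is $j=1$, so the right-hand side is $\gamma_{e_1e_1}-\gamma_{00}=\gamma_{e_1e_1}-1$, while the left-hand side is $-|a_{e_1}|^2=-1$; therefore $\gamma_{e_1e_1}=0$.

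Now positivity of $(\gamma_{\alpha\beta})$ does the real work: a positive semidefinite matrix with a vanishing diagonal entry has the whole corresponding row (and column) equal to zero, so $\gamma_{e_1\nu}=0$ for every $\nu$. I would then specialize the coefficient identity to $\mu=e_1$ with $\nu$ arbitrary. The only $j$ that can contribute to the sum is $j=1$, which happens precisely when $\nu_1\ge 1$ and then contributes $\gamma_{0,\nu-e_1}$; since $\gamma_{e_1\nu}=0$ and the left-hand side equals $-\overline{a_\nu}$ (using $a_{e_1}=1$), this gives $a_\nu=\gamma_{\nu-e_1,0}$ when $\nu_1\ge 1$ and $a_\nu=0$ when $\nu_1=0$. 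Since $\gamma_{\nu-e_1,0}=0$ whenever $\nu-e_1\neq 0$ while $\gamma_{00}=1$, we conclude $a_\nu=0$ for every $\nu\neq e_1$ and $a_{e_1}=1$; that is, $g(z)=z_1$.

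I do not expect a serious obstacle: once one grants that $k^g\succeq 0$ is equivalent to positivity of its Taylor-coefficient matrix, the rest is routine bookkeeping. The conceptual crux — the step to present most carefully — is that the normalization $\tfrac{\partial g}{\partial z_1}(0)=1$ forces the single entry $\gamma_{e_1e_1}$ to vanish, after which positivity propagates this zero across an entire row of $(\gamma_{\alpha\beta})$, and that is what collapses $g$ to the monomial $z_1$. As an alternative one can bypass the bookkeeping by invoking the identification of $\mathcal S_d$ with the set of contractive multipliers of the Drury--Arveson space $H^2_d$ (see \cite{MR1668582}): then $g=M_g\,1$ has $\|g\|_{H^2_d}\le\|1\|_{H^2_d}=1$, whereas by the series formula $\|g\|_{H^2_d}^2=\sum_\alpha|a_\alpha|^2\weight$ already contains $|a_{e_1}|^2=1$ as one of its terms (the multi-index $e_1$ carrying weight $1$), so equality forces $a_\alpha=0$ for all $\alpha\neq e_1$.
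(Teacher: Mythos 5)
Your proof is correct, but it takes a genuinely different route from the one in the paper. The paper restricts $g$ to the disk $D=\{(z_1,0,\dots,0)\}$, applies the classical one-variable Schwarz lemma to get $g|_D(z_1)=z_1$, and then observes that the restriction of the kernel $k^g$ to $D$ is identically $1$, so that the extension of $g|_D$ to an $\mathcal S_d$ function on all of $\mathbb B^d$ is unique; this Nevanlinna--Pick rigidity forces $g\equiv z_1$. You instead work directly with the Taylor coefficient matrix $(\gamma_{\alpha\beta})$ of $k^g$: the identities obtained by clearing the denominator pin down $\gamma_{00}=1$, $\gamma_{\mu 0}=0$ for $\mu\neq 0$, and $\gamma_{e_1e_1}=0$, and then positive semidefiniteness of $(\gamma_{\alpha\beta})$ --- your one external input, namely the standard equivalence between positivity of a sesquiholomorphic kernel and positivity of its coefficient matrix, of which only one direction is actually needed --- kills the entire $e_1$ row and collapses $g$ to $z_1$. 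The bookkeeping checks out (the series manipulations are justified since both $1-g(z)\overline{g(w)}$ and $(1-\langle z,w\rangle)^{-1}$ have absolutely convergent double expansions). Your alternative closing argument is cleaner still and worth preferring: since $\mathcal S_d$ coincides with the closed unit ball of the multiplier algebra of $H^2_d$, one has $\|g\|_{H^2_d}=\|M_g 1\|\le 1$, while the single term of $\sum_\alpha |a_\alpha|^2\,\alpha!/|\alpha|!$ indexed by $e_1$ already equals $|a_{e_1}|^2=1$, so every other coefficient vanishes. What the paper's slice-and-extend argument buys is a template for more general rigidity statements (uniqueness of extension off any set where the Pick kernel degenerates); what yours buys is self-containedness and, in the multiplier form, essentially a two-line proof.
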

\begin{proof}
To say that $g\in\mathcal S_d$ means that the kernel
\begin{equation}\label{E:g_pick}
\frac{1-g(z)\overline{g(w)}}{1-\langle z, w\rangle}
\end{equation}
is positive.  Let $G$ denote the restriction of $g$ to the disk $D=\{(z_1, 0):|z_1|<1 \}$.  Then $G(0)=0$, $G^\prime(0)=1$ and $|G(z)|\leq 1$ for all $|z|<1$.  By the one-variable Schwarz lemma, we have $G(z_1)=z_1$.  Thus the restriction of $g$ to $D$ equals $z_1$.  But since restriction of the kernel (\ref{E:g_pick}) to $D$ is 
$$
\frac{1-G(z_1)\overline{G(w_1)}}{1-z_1 \overline{w_1}}\equiv 1
$$
which trivially has finite rank, the function $g|_D$ has a unique extension to an $\mathcal S_d$ function in $\mathbb B^d$.  This forces $g\equiv z_1$.
\end{proof}
\begin{thm}
For each $\zeta\in\partial\mathbb B^d$, the function
$$
h_\zeta(z) =\frac{1+\langle z, \zeta\rangle}{1-\langle z, \zeta\rangle}
$$
is an extreme point of $S^+$.  
\end{thm}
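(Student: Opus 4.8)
The plan is to prove the stronger assertion that $h_\zeta$ is an \emph{exposed} point of the compact convex set $S^+_0$ (the normalization of $S^+$ by $f(0)=1$); an exposed point is automatically extreme. The guiding idea is that extremality of $h_\zeta$ is already forced by the first-order jet of $f\in S^+_0$ at the origin, so that the rigidity in the Lemma above does all the real work.

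\emph{Reduction to $\zeta=e_1$.} The class $S^+$ is invariant under the substitutions $f(z)\mapsto f(Uz)$ for $U\in U(d)$, since $\langle Uz,Uw\rangle=\langle z,w\rangle$ leaves the defining kernel $\frac{f(z)+\overline{f(w)}}{1-\langle z,w\rangle}$ unchanged, and one checks $h_\zeta(Uz)=h_{U^*\zeta}(z)$. As $U(d)$ acts transitively on $\partial\mathbb B^d$, it suffices to treat $\zeta=e_1=(1,0,\dots,0)$, where $h_{e_1}(z)=\frac{1+z_1}{1-z_1}$.

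\emph{The exposing functional.} Consider $\Lambda(f):=\Re\,\frac{\partial f}{\partial z_1}(0)$, a real-linear functional on $O$ that is continuous in the topology of local uniform convergence (by the Cauchy estimates), hence continuous on the compact set $S^+_0$. I claim that $\Lambda(f)\le 2$ for every $f\in S^+_0$, with equality if and only if $f=h_{e_1}$; granting this, $h_{e_1}$ is the unique maximizer of $\Lambda$ on $S^+_0$, so it is an exposed, hence extreme, point of $S^+_0$, and since $ic\in S^+$ for every real $c$ this also shows $h_\zeta$ spans an extreme ray of the cone $S^+$. To prove the claim, pass to the Cayley transform $\psi:=\frac{f-1}{f+1}$, which is well defined because $\Re f\ge 0$; by the Cayley correspondence between $\mathcal S_d$ and $S^+$ recalled above we have $\psi\in\mathcal S_d$, and $\psi(0)=0$ because $f(0)=1$. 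A short power-series computation gives $\frac{\partial\psi}{\partial z_1}(0)=\tfrac12\,\frac{\partial f}{\partial z_1}(0)$. Restricting $\psi$ to the slice $D=\{(z_1,0,\dots,0):|z_1|<1\}$ produces a one-variable Schur function vanishing at the origin, so the classical Schwarz lemma gives $\bigl|\frac{\partial\psi}{\partial z_1}(0)\bigr|\le 1$, i.e.\ $\bigl|\frac{\partial f}{\partial z_1}(0)\bigr|\le 2$, whence $\Lambda(f)\le 2$. If $\Lambda(f)=2$, then (since $\Re w=2$ and $|w|\le 2$ force $w=2$) we get $\frac{\partial f}{\partial z_1}(0)=2$, so $\frac{\partial\psi}{\partial z_1}(0)=1$; the Lemma now forces $\psi(z)=z_1$, i.e.\ $f=\frac{1+z_1}{1-z_1}=h_{e_1}$.

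\emph{Main obstacle.} There is no serious obstacle once one sees that extremality should be detected at the level of the jet at $0$: the substance is entirely in the Lemma (the multivariable Schwarz rigidity), which upgrades the sharp-but-soft disk Schwarz inequality into an exact identification of the function. The only points needing a moment's care are the jet relation $\frac{\partial\psi}{\partial z_1}(0)=\tfrac12\,\frac{\partial f}{\partial z_1}(0)$ between $f$ and its Cayley transform, and the observation that the equality case $\Re w=2$, $|w|\le 2$ pins down $w$ completely and not merely up to modulus.
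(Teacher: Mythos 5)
Your proof is correct, and it takes a genuinely different route from the paper's. The paper argues directly on a convex decomposition: it writes $h_{e_1}=t\,\frac{1+f}{1-f}+(1-t)\,\frac{1+g}{1-g}$ with $f,g\in\mathcal S_d$, restricts to the slice $D=\{(z_1,0,\dots,0)\}$, invokes the classical fact that $\frac{1+z_1}{1-z_1}$ is extreme for $O^+(\mathbb D)$ to force $f|_D=g|_D=z_1$, and then applies the rigidity Lemma to conclude $f=g=z_1$ globally. You instead exhibit $h_{e_1}$ as the unique maximizer of the continuous real-linear functional $\Lambda(f)=\Re\,\partial f/\partial z_1(0)$ on $S^+_0$, using only the inequality half of the one-variable Schwarz lemma on the slice (via the Cayley transform and the jet relation $\partial_1\psi(0)=\tfrac12\partial_1 f(0)$, both of which check out), and then feed the equality case into the same Lemma. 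Both arguments thus funnel through the identical multivariable rigidity statement; the difference is the mechanism that pins down the first-order data. Your version buys slightly more --- exposedness rather than mere extremality --- and replaces the appeal to the extreme-point structure of $O^+(\mathbb D)$ (i.e.\ one-variable Herglotz theory) with the elementary Schwarz inequality; the paper's version is shorter because it can cite that classical fact outright. Your closing remark that this also identifies an extreme ray of the cone $S^+$ is correct only modulo the additive imaginary constants (the line $\{ic\}\subset S^+$), but the paper's own formulation is loose on exactly the same point, so this is not a defect relative to the statement being proved.
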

\begin{proof}
Without loss of generality we assume $\zeta =e_1=(1,\dots 0)$.  Suppose there exist functions $f, g$ in the Schur class $\mathcal S_d$ and $t\in (0,1)$ such that
$$
h(z)=h_{e_1}(z)=\frac{1+z_1}{1-z_1} =t\frac{1+f(z)}{1-f(z)}  +(1-t)\frac{1+g(z)}{1-g(z)}
$$
Restrict this equation to the disk $D=\{(z_1, 0):|z_1|<1\}$.  Since $h(z)=h(z_1)$ is extreme for $O^+(\mathbb{D})$, it follows that $f$ and $g$ must equal $z_1$ when restricted to $D$.  By the uniqueness in the Schwarz lemma for $\mathcal S_d$, we must have $f(z)=g(z)=z_1$ for all $z\in\mathbb B^d$.  Thus $h$ is extreme in $S^+$.
\end{proof}

By Proposition~\ref{P:pure_states}, there exists a pure state $\rho$ on $\mathcal O_d$ such that
$$
h_\zeta(z)=\rho (H(z, V)).
$$
In fact it can be shown that in this case the state $\rho$ is unique, and equal to the \emph{Cuntz state} $\omega_\zeta$ defined by
$$
\omega_\zeta (V_{i_1}\cdots V_{i_m}V^*_{j_1}\cdots V^*_{j_n})=\zeta_{i_1} \cdots \zeta_{i_m} \overline{\zeta_{j_1}} \cdots \overline{\zeta_{j_n}}.
$$

%The Schwarz lemma for $\mathcal S_d$ can be extended to Schur class mappings between balls of arbitrary dimensions; we obtain the following lemma (which we state without proof since we will not use it).  

%\begin{lem}  Suppose $\varphi$ is a Schur mapping from $\mathbb B^n$ to $\mathbb B^m$ (that is, the matrix
%$$
%\frac{1-\langle \varphi(z),\varphi(w)\rangle}{1-\langle z, w\rangle}
%$$
%is positive), and suppose $\varphi(0)=0$.  If the linear mapping $\varphi^\prime (0)$ is a partial isometry from $\mathbb C^n$ to $\mathbb C^m$, then $\varphi$ is linear; in particular
%$$
%\varphi(z)=\varphi^\prime (0) z.
%$$
%\end{lem}

It seems unlikely that there would be any very concrete parametrization of the extreme points of $S^+_0$.  Nonetheless at present we do not know any extreme points of $S^+_0$ besides the functions $h_\zeta$, so it would be desirable to find other examples.  

In the case of $R^+_0$,  the $d$-variable Toeplitz algebra $\mathcal T_d$ is a type I C*-algebra so there is more hope of classifying the extreme points.  However there is still the difficulty of the non-uniqueness of the Herglotz representation.  The argument in the proof of the previous proposition shows that every extreme point of $R^+_0$ comes from a pure state of $\mathcal T_d$, however as in the case of $S_0^+$ simple examples show that the converse does not hold.  
\begin{prob}
Find other extreme points of $S_0^+$ and $R_0^+$.  Can the extreme points of $R^+_0$ be classified?  Which pure states on $\mathcal T_d$ (resp. $\mathcal O_d$) give rise to extreme points?
\end{prob}

We conclude with a remark on the growth of functions in $S^+$.  One reason for historical interest in extreme points of $O^+$ was their connection with the inner function conjecture (namely, that there were no inner functions on the ball $\mathbb B^d$ when $d>1$); in fact it had been conjectured that functions $f\in O^+$ all belonged to the Hardy space $H^1$ (see \cite[Chapter 19]{MR601594}).  The inner function conjecture turned out to be false, but it is obviously of interest to ask the same questions of other classes, particularly $S^+$.  Recall that for $0<p<\infty$, a function $f\in O$ belongs to the Hardy space $H^p$ if and only if
\begin{equation}\label{E:hp}
\sup_{0<r<1}\int_{\partial\mathbb B^d} |f(r\zeta)|^p \, d\sigma(\zeta) <\infty,
\end{equation}
where $\sigma$ denotes surface measure on the sphere.  When $p\geq 1$ the $p^{th}$ root of this quantity defines a norm on $H^p$.  By standard estimates, the extreme points $h_\zeta$ belong to $H^p$ for all $p<\frac{d+1}{2}$.  In fact this is characteristic of the growth of functions in $S^+$:

\begin{thm}\label{T:Hp}
$S^+(\mathbb B^d) \subset H^p(\mathbb B^d)$ for all $p<\frac{d+1}{2}$. 
\end{thm}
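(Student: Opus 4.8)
The plan is to feed the noncommutative Herglotz representation of $f$ into a homogeneous Taylor decomposition with summable block norms, and then to invoke a Hardy-space embedding theorem. We may assume $f(0)=1$ (replacing $f$ by a positive multiple and subtracting $i\,\Im f(0)$ affects neither $S^+$- nor $H^p$-membership). By Theorem~\ref{T:mccp} together with the GNS construction there are Cuntz isometries $V=(V_1,\dots,V_d)$ on a Hilbert space $\mathcal H$ and a unit vector $\xi\in\mathcal H$ with $f(z)=\langle H(z,V)\xi,\xi\rangle$, and expanding the resolvent gives
$$\frac{f(z)+1}{2}=\bigl\langle(I-\langle z,V\rangle)^{-1}\xi,\xi\bigr\rangle=\sum_{n\ge0}\langle\langle z,V\rangle^n\xi,\xi\rangle,$$
so that $f=1+2\sum_{n\ge1}F_n$ with $F_n(z):=\langle\langle z,V\rangle^n\xi,\xi\rangle$ a holomorphic polynomial, homogeneous of degree $n$.

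The next step is to bound the blocks $F_n$. Writing $\langle z,V\rangle^n\xi=\sum_{|w|=n}z_wV_w\xi$, where $w$ ranges over words of length $n$, $z_w=z_{w_1}\cdots z_{w_n}$ and $V_w=V_{w_1}\cdots V_{w_n}$, the relations $V_i^*V_j=\delta_{ij}$ give $V_w^*V_{w'}=\delta_{w,w'}I$ for $|w|=|w'|=n$, hence $\{V_w\xi:|w|=n\}$ is orthonormal and $\sum_{|w|=n}|\langle V_w\xi,\xi\rangle|^2\le1$. Grouping words by multi-index, the coefficient of $z^\alpha$ in $F_n$ is $c_\alpha=\sum_{w\ \text{of type }\alpha}\langle V_w\xi,\xi\rangle$, and Cauchy--Schwarz over the $\tfrac{n!}{\alpha!}$ words of type $\alpha$ yields $|c_\alpha|^2\alpha!\le n!\sum_{w\ \text{of type }\alpha}|\langle V_w\xi,\xi\rangle|^2$. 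Using the standard value $\|z^\alpha\|_{L^2(\partial\mathbb B^d)}^2=\tfrac{(d-1)!\,\alpha!}{(|\alpha|+d-1)!}$ (normalized surface measure), this gives
$$\|F_n\|_{L^2(\partial\mathbb B^d)}^2=\frac{(d-1)!}{(n+d-1)!}\sum_{|\alpha|=n}|c_\alpha|^2\alpha!\ \le\ \frac{(d-1)!\,n!}{(n+d-1)!}\ \le\ C_d\,(1+n)^{-(d-1)},$$
so that (equivalently $\|F_n\|_{H^2_d}\le1$) the function $f-1$ lies in the holomorphic Hardy--Sobolev space $H^2_s(\mathbb B^d)$ for every $s<\tfrac{d-2}{2}$. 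One also records the pointwise bound $\|F_n\|_{L^\infty(\partial\mathbb B^d)}\le(\dim\mathcal W_n)^{1/2}\|F_n\|_{L^2(\partial\mathbb B^d)}\le C_d$, where $\mathcal W_n$ is the space of degree-$n$ homogeneous holomorphic polynomials and $\dim\mathcal W_n\asymp n^{d-1}$.

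Finally I would pass from the block estimates to $H^p$-membership. Writing $\mathcal R_d f=2\sum_{n\ge1}nF_n$ and using that distinct homogeneous blocks are orthogonal in $L^2(\partial\mathbb B^d)$, together with the uniform $L^\infty$-bound above, one estimates the Littlewood--Paley square function of $f$ --- equivalently the Hardy--Littlewood integral $\int_{\mathbb B^d}|\mathcal R_d f(z)|^p(1-|z|^2)^{p-1}\,d\nu(z)$ --- and concludes that $f\in H^p(\mathbb B^d)$ for all $p<\tfrac{d+1}{2}$. I expect this last step to be the main obstacle: the naive triangle-inequality bound on $\sum_n r^nF_n$ in $L^p$ discards the near-orthogonality of the blocks and is far too crude, so one must run a genuine square-function (or interpolation) argument in $L^p$, and it is the balance between the decay rate $(1+n)^{-(d-1)}$ of the blocks and the boundary integrals of the Herglotz kernel that produces precisely the exponent $\tfrac{d+1}{2}$.
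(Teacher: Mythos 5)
Your block estimates are correct: with $f(z)=\langle H(z,V)\xi,\xi\rangle$ one does get $\|F_n\|_{H^2_d}\le 1$, hence $\|F_n\|_{L^2(\partial\mathbb B^d)}^2\le \frac{(d-1)!\,n!}{(n+d-1)!}\lesssim n^{-(d-1)}$, and $\|F_n\|_\infty\le 1$ (most easily because $\langle \zeta,V\rangle$ is an isometry for $\zeta\in\partial\mathbb B^d$). But the step you defer --- passing from these block norms to $H^p$ for $p<\frac{d+1}{2}$ --- is the entire content of the theorem, and it cannot be completed from the information you have retained. The estimates $\|F_n\|_{L^2}\lesssim n^{-(d-1)/2}$ say only that $f-f(0)$ lies in the Hardy--Sobolev scale at smoothness $s<\frac{d-2}{2}$; for $d=2$ this is \emph{negative} smoothness, i.e.\ a Bergman-type space, and such spaces contain holomorphic functions with no radial boundary values at all, hence lying in no $H^p$, $p>0$. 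The extra $L^\infty$ bound does not repair this: one can manufacture homogeneous blocks with $\|F_n\|_\infty\le 1$ and $\|F_n\|_{L^2}\asymp n^{-1/2}$ whose pointwise square function $\sum_n|F_n(\zeta)|^2$ diverges a.e., so no Littlewood--Paley argument run purely on the block norms can succeed in $d=2$. The positivity of the kernel $\frac{f(z)+\overline{f(w)}}{1-\langle z,w\rangle}$ carries strictly more information than the sequence $\{\|F_n\|\}$, and your reduction discards it at exactly the point where it is needed. (Ironically, for $d\ge 3$ the ``naive'' triangle inequality you dismiss does work: interpolating, $\|F_n\|_{L^p}\le\|F_n\|_{L^2}^{2/p}\|F_n\|_\infty^{1-2/p}\lesssim n^{-(d-1)/p}$, which is summable for $p<d-1$, a range containing $p<\frac{d+1}{2}$. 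So your argument is complete, and even stronger than claimed, precisely in the dimensions where it is not needed, and fails in the critical case $d=2$.)

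The paper's proof takes a different route that uses the kernel positivity directly. Write $g=\frac{1+\varphi}{1-\varphi}$ with $\varphi\in\mathcal S_d$ and set $\psi=(\varphi,0,\dots,0)$, a self-map of $\mathbb B^d$. The Schur-class condition says exactly that $\frac{1-\langle\psi(z),\psi(w)\rangle}{1-\langle z,w\rangle}$ is a positive kernel, which forces the composition operator $C_\psi$ to be bounded on $H^2(\mathbb B^d)$ and hence on every $H^p$, $p<\infty$. Since $g=C_\psi\bigl(\frac{1+z_1}{1-z_1}\bigr)$ and $\frac{1+z_1}{1-z_1}\in H^p$ for $p<\frac{d+1}{2}$, the result follows. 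If you want to salvage your decomposition approach, you would need to extract from the Cuntz representation some genuinely multiplicative or maximal-function information beyond the $\ell^2$ data of the blocks --- at which point you are essentially rebuilding the Schur function $\varphi$.
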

\begin{proof}
Let $g\in S^+$ and write $g=\frac{1+\varphi}{1-\varphi}$ for $\varphi\in\mathcal S_d$.  Define a holomorphic mapping $\psi:\mathbb B^d\to \mathbb B^d$ by
$$
\psi(z)=(\varphi(z), 0\dots 0)
$$
Since $\varphi\in\mathcal S_d$, the kernel
$$
\frac{1-\langle \psi(z),\psi(w)\rangle}{1-\langle z, w\rangle}=\frac{1-\varphi(z)\overline{\varphi(w)}}{1-\langle z, w\rangle}
$$
is positive semidefinite.  It follows that the composition operator
$$
C_\psi :f\to f\circ\psi
$$
is bounded on $H^2$ \cite[Theorem 5]{mj-comp}, and hence bounded on $H^p$ for all $p<\infty$ \cite[Corollary 1.2]{MR783578}.  Thus
$$
g=\frac{1+\varphi}{1-\varphi} =C_\psi \left(\frac{1+z_1}{1-z_1}\right)
$$
belongs to $H^p$ for all $p<\frac{d+1}{2}$.
\end{proof}
In contrast, $O^+$ contains functions which do not belong to $H^1$; indeed if $\varphi$ is an inner function then $g=\frac{1+\varphi}{1-\varphi}\notin H^1$.  As a corollary we see that the Schur class $\mathcal S_d$ does not contain any inner functions when $d>1$.

\bibliographystyle{plain} 
\bibliography{realpart} 
\end{document}